\def\N{\mathbb{N}}
\def\F{\mathbb{F}}
\newtheorem{theorem}{Theorem}[section]
\newtheorem{proposition}[theorem]{Proposition}
\newtheorem{corollary}[theorem]{Corollary}
\newtheorem{lemma}[theorem]{Lemma}
\newtheorem{remark}[theorem]{Remark}
\begin{document}
\title{Non-splitting bi-unitary perfect polynomials  over~$\F_4$ with less than five prime factors}
 \author{Olivier Rahavandrainy\\
Univ Brest, UMR CNRS 6205\\
Laboratoire de Math\'ematiques de Bretagne Atlantique}
\maketitle
Mathematics Subject Classification (2010): 11T55, 11T06.\\
\\
{\bf{Abstract}}
We identify all non-splitting bi-unitary perfect polynomials 
over the field~$\F_4$, which admit  at most four irreducible divisors. There is an infinite number of such divisors.
{\section{Introduction}}
In this paper, we work over the finite field  $\F_4$ of $4$ elements:
$$\text{$\F_4 = \{0,1,\alpha,\alpha+1\}$ where $\alpha^2+\alpha+1 = 0$.}$$
As usual, $\N$ (resp. $\N\sp{*}$) denotes the set of nonnegative
integers (resp. of positive integers).

Throughout the paper, every polynomial is a monic one.

Let $S \in \F_4[x]$ be a nonzero polynomial. 
A divisor $D$ of $S$ is called unitary if $\gcd(D,S/D)=1$. We designate by $\gcd_u(S,T)$ the greatest
common unitary divisor of $S$ and $T$.
A divisor $D$ of $S$ is called bi-unitary if $\gcd_u(D,S/D)=1$.
We denote by $\sigma(S)$ (resp. $\sigma^*(S)$, $\sigma^{**}(S)$) the sum of all divisors
(resp. unitary divisors, bi-unitary divisors)
of $S$.
The functions $\sigma$, $\sigma^*$ and $\sigma^{**}$ are all multiplicative.
We say that $S$ is \emph{perfect} (resp. \emph{unitary perfect},
\emph{bi-unitary perfect}) if $\sigma(S) = S$
(resp. $\sigma^*(S)=S$, $\sigma^{**}(S)=S$).

Finally, we say that $A$ is {\it{indecomposable}} bi-unitary perfect if $A$ has no proper divisor which is  bi-unitary perfect.

Several studies are done about perfect, unitary and bi-unitary perfect polynomials (see \cite{BeardU}, \cite{BeardBiunitary}, \cite{Canaday}, \cite{Gall-Rahav-F4}, 
\cite{Gall-Rahav9}, \cite{Gall-Rahav2},  \cite{Gall-Rahav10} and references therein).

In this paper, we are interested in non-splitting polynomials over $\F_4$ which are bi-unitary perfect (b.u.p.) and divisble by $r$ irreducible factors, 
where $r \leq 4$.

The splitting case is already treated in (\cite{Gall-Rahav-split-Fp2}, Proposition 3.1 and Theorem 3.2). However, we better precise  these results in Theorem \ref{casebupsplit}.

We consider the two following sets:
$$\begin{array}{l}
\Omega_1:=\{P \in \F_4[x]: P \text{ and $P+1$ are both irreducible}\}\\ 
\Omega_2:=\{P \in \F_4[x]: ¨P, \text{$P+1$, $P^3+P+1$ and $P^3+P^2+1$ are all irreducible}\}.
\end{array}$$
We see that $\Omega_2 \subset \Omega_1$, $\Omega_2$ contains the four (monic) monomials of $\F_4[x]$ and it is an infinite set (\cite{Gall-Rahav8}, Lemma 2). For example, for any $k \in \N$, 
$P_k:= x^{2\cdot 5^k} +  x^{5^k} +\alpha \in \Omega_2$.

We get the following two results related to the fact that $A$ splits or not.

\begin{theorem} \label{casebupsplit}
Let $A = x^a(x+1)^b(x+\alpha)^c(x+\alpha+1)^d \in \F_4[x]$, where $a,b,c,d \in \N$ are not all odd. Then, $A$ is b.u.p if and only if one of the following conditions holds:\\
i) $a=b=c=d=2$,\\
ii) $a=b=2$ and $c=d=2^n-1$, for some $n \in \N$,\\
iii) $a=b=2^n-1$ and $c=d=2$, for some $n \in \N$,\\
iv) $a, b,c,d$ are given by Table $(\ref{expovalues})$.
\end{theorem}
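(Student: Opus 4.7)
The approach is to exploit the multiplicativity of $\sigma^{**}$ to reduce $\sigma^{**}(A)=A$ to an identity among four prime-power factors over the linear primes $P\in\{x,\,x+1,\,x+\alpha,\,x+\alpha+1\}$. The basic computation uses
\[
\sigma^{**}(P^n) = \begin{cases} \sigma(P^n), & n \text{ odd}, \\ \sigma(P^n)-P^{n/2}, & n \text{ even}, \end{cases}
\]
together with the two $\F_4$-identities $(P+1)^{2^n}=P^{2^n}+1$ in characteristic $2$ (which yields $\sigma(P^{2^n-1})=(P+1)^{2^n-1}$) and $\sigma^{**}(P^2)=(P+1)^2$. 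Both identities keep the resulting factorization within the four linear primes, thanks to the pairing $P\leftrightarrow P+1$ that swaps $\{x,x+1\}$ with $\{x+\alpha,x+\alpha+1\}$.

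\textbf{Sufficiency.} For each of (i)--(iv), direct substitution using these identities recovers $A$. Case (i) uses $\sigma^{**}(P^2)=(P+1)^2$ uniformly on all four factors, so the four factors multiply via the pair-swap back to $A$; cases (ii) and (iii) combine it with $\sigma(P^{2^n-1})=(P+1)^{2^n-1}$ applied to the complementary pair. The entries of Table~\ref{expovalues} are finitely many specific tuples that I would verify one by one.

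\textbf{Necessity.} The hypothesis ``not all odd'' ensures $\sigma^{**}\ne \sigma$ on at least one of the four factors. Since $A$ has no irreducible divisors outside the four linear primes, every $\sigma^{**}(P^e)$ must split completely over this set---a severe restriction. I would split into cases according to which subset of $\{a,b,c,d\}$ is even, and in each case analyze the factorization of $\sigma^{**}(P^n)$ via the $\F_4$-identities above, showing that each exponent is forced to be either $2$, of the form $2^n-1$, or one of the sporadic values listed in Table~\ref{expovalues}. The characterization of \cite{Gall-Rahav-split-Fp2} (Proposition 3.1 and Theorem 3.2) provides the a priori bound needed to reduce this to a finite enumeration.

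The main obstacle is proving exhaustiveness: for an exponent $n$ outside the listed families, one must show that $\sigma^{**}(P^n)$ contains an irreducible factor of degree $\geq 2$, which cannot cancel in the product $\prod_i \sigma^{**}(P_i^{e_i})$ because $A$ itself supplies only the four linear primes. This requires a careful degree and factorization analysis of $\sigma^{**}(P^n)$ for each linear $P$ over $\F_4$, paired with a parity analysis of the four exponents, and is where most of the technical work resides.
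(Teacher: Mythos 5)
Your proposal is a plan rather than a proof: the entire necessity direction is left as a declared ``obstacle''. Concretely, what is missing is precisely the content of the theorem: (a) a proof that an even exponent $e$ can only contribute if $\sigma^{**}(x^e)$ splits over $\F_4$, which forces $e\in\{2,4,6\}$ (the analogue of Lemma~\ref{splitcritere}(i)), and that an odd exponent must be of the form $N\cdot 2^n-1$ with $N\in\{1,3\}$ (Lemma~\ref{splitcritere}(ii)); and (b) the finite but nontrivial enumeration, comparing exponents of the four linear primes on both sides of $\sigma^{**}(A)=A$, that yields exactly the families i)--iii) and the fourteen sporadic tuples of Table~(\ref{expovalues}) while excluding everything else (e.g.\ why $a=5,6$ can occur but $a=8,10,12$ cannot, and why mixed-parity combinations collapse to the listed ones). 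You gesture at this with ``a careful degree and factorization analysis'' but never perform it, so no reader could extract the table from your argument. Moreover, your appeal to \cite{Gall-Rahav-split-Fp2} for ``the a priori bound needed to reduce this to a finite enumeration'' is essentially circular: that reference contains (Proposition 3.1 and Theorem 3.2) the classification you are asked to prove, so either you prove the bound yourself or you are simply citing the theorem.

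For comparison, the paper does not reprove the statement either: it cites \cite{Gall-Rahav-split-Fp2} outright and devotes Section~3 to Lemma~\ref{perfectresult}, which repairs the classification of splitting \emph{perfect} polynomials over $\F_4$ (a family was missing in \cite{Gall-Rahav-F4}); that correction is what allows the refined statement of Theorem~\ref{casebupsplit}. Your sketch does not engage with this interface at all, even though pairs of odd exponents make $\sigma^{**}$ coincide with $\sigma$ on the corresponding factors, which is exactly where the perfect-polynomial classification enters. Two smaller points: the map $T\mapsto T+1$ fixes each pair $\{x,x+1\}$ and $\{x+\alpha,x+\alpha+1\}$ setwise rather than swapping the two pairs, as you assert; and your formula $\sigma^{**}(P^n)=\sigma(P^n)-P^{n/2}$ for $n$ even is correct but you never exploit its factorized form $(1+P)\sigma(P^{n/2})\sigma(P^{n/2-1})$, which is what actually drives the case analysis.
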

\begin{equation} \label{expovalues}
\begin{array}{|l|c|c|c|c|c|c|c|c|c|c|c|c|c|c|}
\hline
a&4&4&4&4&4&4&5&5&5&5&6&6&6&6\\
\hline
b&3&3&4&4&4&4&3&3&4&4&6&6&6&6\\
\hline
c&3&4&3&4&5&6&4&6&4&5&3&4&5&6\\
\hline
d&4&3&5&4&3&6&4&6&5&4&5&4&3&6\\
\hline
\end{array}
\end{equation}

\begin{theorem} \label{casebupnon-split}
Let $A =P^aQ^bR^cS^d \in \F_4[x]$, where $A$ does not split and $a,b,c,d$ are not all odd. Then, $A$ is b.u.p if and only if  one of the following conditions holds:\\
i) $a=b=c=d=2$, $P, R \in \Omega_1$, $Q = P+1$ and $S = R+1$,\\
ii) $a=b=2$, $c=d=2^n-1$, for some $n \in \N$,  $P, R \in \Omega_1$, $Q = P+1$ and $S = R+1$,\\
iii) $a=b=2^n-1$, for some $n \in \N$,  $c=d=2$,  $P, R \in \Omega_1$, $Q = P+1$ and $S = R+1$,\\
iv) $P \in \Omega_2$, $Q = P+1,\ R, S \in \{P^3+P+1 , P^3+P^2+1\}$ and $(a,b,c,d) \in \{(7,13,2,2),(13,7,2,2),(14,14,2,2)\}$.
\end{theorem}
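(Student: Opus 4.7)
The plan is to exploit multiplicativity of $\sigma^{**}$, turning the condition $\sigma^{**}(A)=A$ into
\[
\sigma^{**}(P^a)\sigma^{**}(Q^b)\sigma^{**}(R^c)\sigma^{**}(S^d)=P^aQ^bR^cS^d.
\]
The whole argument will rest on the facts that, for irreducible $T\in\F_4[x]$, $\sigma^{**}(T^n)=\sigma(T^n)$ if $n$ is odd and $\sigma^{**}(T^n)=\sigma(T^n)+T^{n/2}$ if $n$ is even, together with two atomic identities in characteristic $2$: $\sigma^{**}(T^2)=(T+1)^2$ and $\sigma(T^{2^k-1})=(T+1)^{2^k-1}$ (the latter from $T^{2^k}+1=(T+1)^{2^k}$). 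The overarching constraint throughout is that every irreducible factor of the product on the left must already lie in $\{P,Q,R,S\}$.

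My first step is to use the hypothesis that not all of $a,b,c,d$ are odd. If $a$ is even, every irreducible factor of $P+1$ must lie in $\{P,Q,R,S\}$; in the simplest scenario $P+1$ is itself irreducible (that is, $P\in\Omega_1$) and hence equals one of $Q,R,S$. Iterating across the even exponents typically forces the pairing $Q=P+1$ and $S=R+1$ after relabelling. The exponents must then satisfy a balance in each variable, and the analysis splits according to whether the remaining (possibly odd) exponents take the form $2^k-1$ or not. For $a=b=c=d=2$ the balance directly yields case~(i); pairing with odd exponents $2^n-1$ through $\sigma(T^{2^n-1})=(T+1)^{2^n-1}$ then produces cases~(ii) and~(iii).

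Case~(iv) is the most delicate, since it requires genuinely new irreducibles. With $Q=P+1$, the pivotal computation is
\[
\sigma(Q^{13})=\frac{Q^{14}+1}{Q+1}=(Q+1)\bigl[(Q^3+Q+1)(Q^3+Q^2+1)\bigr]^2=P\,(P^3+P^2+1)^2(P^3+P+1)^2,
\]
where one invokes the characteristic-$2$ cyclotomic factorization $T^7+1=(T+1)(T^3+T+1)(T^3+T^2+1)$. The two new irreducibles must coincide with $\{R,S\}$, which is precisely the condition $P\in\Omega_2$. The identities $R+1=PQ^2$ and $S+1=P^2Q$ then give $(R+1)(S+1)=P^3Q^3$, and matching the remaining powers pins down the triples $(7,13,2,2)$ and $(13,7,2,2)$. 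The variant $a=b=14$ is handled by the analogous expansion $\sigma^{**}(P^{14})=(1+P^8)\sigma(P^6)=Q^8(P^3+P+1)(P^3+P^2+1)$, which again introduces exactly $\{R,S\}$ as the new factors.

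The principal obstacle I foresee is the exhaustive parity case analysis: ruling out every mixed pattern of even and odd exponents not on the list. In particular, I need to show that odd exponents other than those of the form $2^k-1$ (or the exceptional pair $\{7,13\}$) cannot close up, and that configurations with three odd and one even exponent, or four even exponents not all equal to $2$, either collapse into the listed families or force an irreducible factor of degree too large to belong to $\{P,Q,R,S\}$.
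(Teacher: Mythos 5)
Your sufficiency computations are sound: the identities $\sigma^{**}(T^2)=(T+1)^2$, $\sigma(T^{2^k-1})=(T+1)^{2^k-1}$, $\sigma(Q^{13})=P\,R^2S^2$ (with $Q=P+1$, $\{R,S\}=\{P^3+P+1,P^3+P^2+1\}$) and $\sigma^{**}(P^{14})=Q^8RS$ are exactly the verifications the paper relies on, and the observation $\sigma^{**}(T^{2m})=\sigma(T^{2m})+T^m$ is correct. But the theorem is an ``if and only if,'' and the necessity direction --- which is the real content --- is precisely what you defer to ``the principal obstacle I foresee.'' You never supply a mechanism that makes the case analysis finite. In the paper this role is played by a specific structural package: a degree ordering $\deg P\leq\deg Q\leq\deg R\leq\deg S$ which forces $Q=P+1$ (not merely ``in the simplest scenario''), the facts $1+R=P^{u_1}Q^{v_1}$ and $1+S=P^{u_2}Q^{v_2}R^z$, and above all the key lemma (quoted from an earlier paper of Gallardo--Rahavandrainy) that a factorization $1+x+\cdots+x^{2w}=UV$ with $U,V$ of the form $x^u(x+1)^v+1$ forces $2w=6$ and $U,V\in\{x^3+x+1,x^3+x^2+1\}$. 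That lemma, combined with the non-splitting hypothesis (to exclude $h\in\{4,6,12\}$), is what yields the crucial bounds: even exponents of $P,Q$ lie in $\{2,14\}$, odd ones are $2^ru-1$ with $u\in\{1,7\}$, and the exponents of $R,S$ are $2$ or $2^s-1$; a final exponent comparison (done in the paper partly by Maple) then pins down $(7,13,2,2)$, $(13,7,2,2)$, $(14,14,2,2)$. Your proposal contains no substitute for this finiteness step, so statements like ``odd exponents other than $2^k-1$ or the pair $\{7,13\}$ cannot close up'' remain unproved assertions rather than consequences of anything you establish.

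A second, smaller gap: for the decomposable families (i)--(iii) you appeal to a vague ``balance in each variable,'' whereas the paper reduces them cleanly via the multiplicativity lemma (if $A=A_1A_2$ is b.u.p.\ with $\gcd(A_1,A_2)=1$, then $A_1$ is b.u.p.\ iff $A_2$ is) to the two-prime case $\omega=2$, where one shows $Q=P+1$ and $h=k=2$ or $h=k=2^r-1$ (the all-odd subcase invoking the known classification of perfect polynomials). You would also need to dispose of $\omega(A)=3$ (the paper proves no such b.u.p.\ exists with exponents not all odd), since your setup does not a priori exclude mixed configurations collapsing onto three primes. None of these steps is deep, but as written your proposal proves only the easy direction and a correct heuristic for the hard one.
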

Note that if $a,b,c$ and $d$ are all odd, then  $\sigma^{**}(A) = \sigma(A)$. So, $A$ is b.u.p. if and only if $A$ is perfect. We also see that there exists no b.u.p. polynomial $A$ with $\omega(A) = 3$.

Since $\Omega_1$ and $\Omega_2$ are infinite sets, we see that there are infinitely many indecomposable and odd b.u.p. polynomials over $\F_4$, even if there are only three 4-tuples available exponents.

\section{Preliminaries}\label{preliminairebup}
Some of  the following results are obvious or (well) known, so we omit
their proofs. See also \cite{Gall-Rahav-bup-Mersenne}. 
\begin{lemma} \label{gcdunitary}
Let $T$ be an irreducible polynomial over $\F_4$ and $k,l\in \N^*$.
Then,  $\gcd_u(T^k,T^l) = 1 \ ($resp. $T^k)$ if $k \not= l \ ($resp. $k=l)$.\\
In particular, $\gcd_u(T^k,T^{2n-k}) = 1$ for $k \not=n$,  $\gcd_u(T^k,T^{2n+1-k}) = 1$ for
any $0\leq k \leq 2n+1$.
\end{lemma}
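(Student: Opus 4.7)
The plan is a direct case analysis based on the simple divisor structure of a prime power. Since $T$ is irreducible over $\F_4$, every monic divisor of $T^m$ has the form $T^j$ with $0\le j\le m$. Consequently the common monic divisors of $T^k$ and $T^l$ are exactly the $T^j$ with $0\le j\le \min(k,l)$. The first thing I would verify is the characterization of unitarity: by definition, $T^j$ is a unitary divisor of $T^m$ iff $\gcd(T^j,T^{m-j})=1$, and since the only irreducible factor in sight is $T$ itself, this gcd is $1$ exactly when $j=0$ or $j=m$. Hence a common unitary divisor of $T^k$ and $T^l$ is some $T^j$ with $j\in\{0,k\}\cap\{0,l\}$.

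From this the two main claims drop out by splitting on $k$ versus $l$. If $k=l$, the intersection is $\{0,k\}$, so the greatest common unitary divisor is $T^k$. If $k\neq l$, then $\{0,k\}\cap\{0,l\}=\{0\}$, so $\gcd_u(T^k,T^l)=T^0=1$. The two specializations are then immediate: for $l=2n-k$, the equality $k=l$ reduces to $k=n$, so whenever $k\neq n$ the general result yields $\gcd_u(T^k,T^{2n-k})=1$; for $l=2n+1-k$, the equation $k=2n+1-k$ has no integer solution because $2n+1$ is odd, so $k\neq l$ automatically and $\gcd_u(T^k,T^{2n+1-k})=1$ for every admissible $k$ (including the boundary case $k=0$, where $T^0=1$ makes the statement trivial).

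There is no genuine obstacle here; the entire argument is a short observation made possible by the fact that an irreducible $T$ has a totally ordered divisor lattice on its powers. The only points I would be careful about are staying within the monic convention so the enumeration $T^0,\ldots,T^m$ is exhaustive, and checking that the boundary index $k=0$ in the last specialization is handled correctly, as noted above.
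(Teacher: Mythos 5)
Your proof is correct: the paper itself omits the proof of this lemma as ``obvious or well known,'' and your argument --- identifying the unitary divisors of $T^m$ as exactly $\{1, T^m\}$ and intersecting --- is precisely the standard verification the authors had in mind, including the harmless boundary case $k=0$ in the second specialization. Nothing further is needed.
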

\begin{lemma} \label{aboutsigmastar2}
Let $T \in \F_4[x]$ be irreducible. Then\\
i) $\sigma^{**}(T^{2n}) = (1+T)\sigma(T^n) \sigma(T^{n-1}), \
\sigma^{**}(T^{2n+1}) = \sigma(T^{2n+1})$.\\
ii) For any $c \in \N$, $1+T$ divides $\sigma^{**}(T^{c})$ but $T$ does not.
\end{lemma}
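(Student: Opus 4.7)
The plan is to read off the bi-unitary divisors of $T^k$ from Lemma \ref{gcdunitary} and then perform an elementary computation in characteristic $2$.

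First I would classify the bi-unitary divisors of $T^k$. Every divisor of $T^k$ has the form $T^j$ with $0 \le j \le k$, and $T^j$ is bi-unitary iff $\gcd_u(T^j,T^{k-j})=1$. By Lemma \ref{gcdunitary}, this holds exactly when $j \ne k-j$, i.e. $2j \ne k$. Hence when $k=2n+1$ is odd, every divisor is bi-unitary and $\sigma^{**}(T^{2n+1})=\sum_{j=0}^{2n+1}T^j=\sigma(T^{2n+1})$, giving the second formula of (i). When $k=2n$ is even, only $T^n$ is excluded, so
\[
\sigma^{**}(T^{2n}) \;=\; \sigma(T^{2n})-T^n \;=\; \sum_{j=0}^{n-1}T^j \;+\; \sum_{j=n+1}^{2n}T^j.
\]

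Next I would factor this expression in $\F_4[x]$. Pulling $T^{n+1}$ out of the second sum gives
\[
\sigma^{**}(T^{2n}) \;=\; (1+T^{n+1})\,\sigma(T^{n-1}).
\]
The key identity is that in characteristic $2$ one has $(1+T)\sigma(T^n)=1+T^{n+1}$, because $(1+T)(1+T+\cdots+T^n)$ telescopes to $1+T^{n+1}$ once the duplicated middle terms cancel mod $2$. Substituting this yields $\sigma^{**}(T^{2n})=(1+T)\sigma(T^n)\sigma(T^{n-1})$, completing (i).

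For (ii), the factor $1+T$ is visible in the formula just proved for even exponent, so $1+T$ divides $\sigma^{**}(T^{2n})$ immediately. For odd exponent $2n+1$, the polynomial $\sigma(T^{2n+1})=1+T+\cdots+T^{2n+1}$ has an even number of terms; evaluating at $T=1$ gives $2n+2\equiv 0$ in $\F_4$, so $1+T\mid \sigma(T^{2n+1})=\sigma^{**}(T^{2n+1})$ (equivalently, pair consecutive terms to write $\sigma(T^{2n+1})=(1+T)(1+T^2+\cdots+T^{2n})$). Finally $T\nmid \sigma^{**}(T^c)$ in either parity because $\sigma^{**}(T^c)$ has constant term $1$, being a sum of monomials $T^j$ that includes $j=0$.

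There is no real obstacle here: the only thing to be careful about is the characteristic-$2$ cancellation that turns $(1+T)\sigma(T^n)$ into $1+T^{n+1}$, and the edge case $c=0$ (where $\sigma^{**}(1)=1$) which is implicitly excluded by the statement since the lemma is applied later only to positive exponents.
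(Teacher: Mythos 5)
Your proof is correct. The paper actually omits the proof of this lemma (it is listed among the ``obvious or well known'' facts), and your argument --- identifying the bi-unitary divisors of $T^k$ via Lemma \ref{gcdunitary}, so that $\sigma^{**}(T^{2n})=\sigma(T^{2n})-T^n$ and $\sigma^{**}(T^{2n+1})=\sigma(T^{2n+1})$, and then using the characteristic-$2$ telescoping $(1+T)\sigma(T^n)=1+T^{n+1}$ --- is exactly the standard computation the paper is relying on; your handling of part (ii) (the factor $1+T$ visible in both parities, $T\nmid\sigma^{**}(T^c)$ from the constant term, and the harmless $c=0$ edge case) is likewise fine.
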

\begin{corollary} \label{expo-all-odd}
Let $A = P^h Q^kR^l S^t$ be such that $h,k,l$ and $t$ are all odd. Then, $A$ is b.u.p. if and only if it is perfect.
\end{corollary}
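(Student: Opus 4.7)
The plan is short because the corollary is essentially a direct consequence of Lemma \ref{aboutsigmastar2}(i). I would proceed as follows.

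First, I would observe that each of the four exponents $h,k,l,t$ is of the form $2n+1$ for some $n\in\N$. By the second formula in Lemma \ref{aboutsigmastar2}(i), for any irreducible $T\in\F_4[x]$ and any odd exponent $2n+1$, we have $\sigma^{**}(T^{2n+1})=\sigma(T^{2n+1})$. Applying this to each of the four prime-power factors of $A$ gives
\[
\sigma^{**}(P^h)=\sigma(P^h),\quad \sigma^{**}(Q^k)=\sigma(Q^k),\quad \sigma^{**}(R^l)=\sigma(R^l),\quad \sigma^{**}(S^t)=\sigma(S^t).
\]

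Next, I would invoke the multiplicativity of both $\sigma$ and $\sigma^{**}$ (stated in the introduction). Since $P,Q,R,S$ are pairwise coprime (distinct irreducibles), multiplicativity yields
\[
\sigma^{**}(A)=\sigma^{**}(P^h)\sigma^{**}(Q^k)\sigma^{**}(R^l)\sigma^{**}(S^t)=\sigma(P^h)\sigma(Q^k)\sigma(R^l)\sigma(S^t)=\sigma(A).
\]

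Finally I would conclude: the equality $\sigma^{**}(A)=\sigma(A)$ means $\sigma^{**}(A)=A$ holds if and only if $\sigma(A)=A$, i.e.\ $A$ is b.u.p.\ if and only if $A$ is perfect. There is no real obstacle here; the only thing to be a bit careful about is to cite multiplicativity of $\sigma^{**}$ (already recorded in the introduction) and to note that the statement of the corollary implicitly assumes $P,Q,R,S$ are distinct irreducibles so that the multiplicativity applies.
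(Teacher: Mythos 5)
Your proof is correct and is exactly the argument the paper intends: the corollary is stated without proof as an immediate consequence of Lemma \ref{aboutsigmastar2}(i) (namely $\sigma^{**}(T^{2n+1})=\sigma(T^{2n+1})$) together with the multiplicativity of $\sigma$ and $\sigma^{**}$. Nothing is missing.
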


\begin{lemma} \label{multiplicativity}
If $A = A_1A_2$ is b.u.p. over $\F_4$ and if
$\gcd(A_1,A_2) =~1$, then $A_1$ is b.u.p. if and only if
$A_2$ is b.u.p.
\end{lemma}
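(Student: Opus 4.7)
The plan is to deduce this immediately from the multiplicativity of $\sigma^{**}$, which is recalled in the Introduction. Since $\gcd(A_1,A_2)=1$, we have
\[
\sigma^{**}(A) \;=\; \sigma^{**}(A_1)\,\sigma^{**}(A_2),
\]
and the hypothesis that $A$ is b.u.p.\ turns this into the polynomial identity
\[
\sigma^{**}(A_1)\,\sigma^{**}(A_2) \;=\; A_1 A_2.
\]

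From here I would simply cancel in the integral domain $\F_4[x]$. If $A_1$ is b.u.p., substitute $\sigma^{**}(A_1)=A_1$ into the displayed identity; since $A_1$ is a nonzero polynomial, we may cancel it to obtain $\sigma^{**}(A_2)=A_2$, i.e.\ $A_2$ is b.u.p. The reverse implication is identical with the roles of $A_1$ and $A_2$ exchanged.

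There is really no obstacle here: once multiplicativity of $\sigma^{**}$ (which depends crucially on the coprimality assumption $\gcd(A_1,A_2)=1$) is granted, the statement is a one-line cancellation argument in $\F_4[x]$. The only thing worth checking carefully is that the use of multiplicativity is legitimate, i.e.\ that the factorization $A=A_1A_2$ is into coprime pieces, which is exactly the hypothesis.
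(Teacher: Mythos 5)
Your argument is correct and is precisely the intended one: the paper omits the proof of this lemma as ``obvious or (well) known,'' and the standard justification is exactly what you wrote, namely multiplicativity of $\sigma^{**}$ applied to the coprime factorization $A=A_1A_2$ followed by cancellation of the nonzero factor in the integral domain $\F_4[x]$. Nothing further is needed.
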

\begin{lemma} \label{translation}
If $A$ is b.u.p. over $\F_4$, then the polynomial $A(x+\lambda)$
is also b.u.p. over~$\F_4$, for any $\lambda \in \{1,\alpha,\alpha+1\}$.
\end{lemma}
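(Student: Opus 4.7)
The plan is to exploit the fact that the substitution $\tau_\lambda : \F_4[x] \to \F_4[x]$ defined by $\tau_\lambda(S)(x) := S(x+\lambda)$ is a ring automorphism of $\F_4[x]$, since $\tau_{-\lambda} = \tau_\lambda^{-1}$ (in characteristic $2$, $-\lambda = \lambda$). I would first record the elementary structural properties of $\tau_\lambda$: it is $\F_4$-linear and multiplicative, it preserves degrees, it sends monic polynomials to monic polynomials, and it preserves irreducibility. Consequently it induces a degree-preserving bijection on the set of monic polynomials that is compatible with the divisibility order.

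Next, I would verify that $\tau_\lambda$ commutes with both the ordinary and the unitary greatest common divisor. Since $\tau_\lambda$ is a ring automorphism, for any $S,T \in \F_4[x]$ one has $\tau_\lambda(\gcd(S,T)) = \gcd(\tau_\lambda(S), \tau_\lambda(T))$; applying this to $T = A/D$ and to the divisors of $D$ that are coprime to $D/\cdot$, one deduces $\tau_\lambda(\gcd_u(S,T)) = \gcd_u(\tau_\lambda(S), \tau_\lambda(T))$. In particular, $D$ is a bi-unitary divisor of $A$ if and only if $\tau_\lambda(D)$ is a bi-unitary divisor of $\tau_\lambda(A)$, so $\tau_\lambda$ induces a bijection between the bi-unitary divisors of $A$ and those of $\tau_\lambda(A)$.

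Summing over bi-unitary divisors and again using the additive-multiplicative nature of $\tau_\lambda$ gives the key identity
\begin{equation*}
\sigma^{**}(\tau_\lambda(A)) \;=\; \tau_\lambda(\sigma^{**}(A)).
\end{equation*}
If $A$ is b.u.p., i.e.\ $\sigma^{**}(A) = A$, then applying $\tau_\lambda$ yields $\sigma^{**}(\tau_\lambda(A)) = \tau_\lambda(A)$, so $\tau_\lambda(A) = A(x+\lambda)$ is also b.u.p.

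There is no real obstacle here; the only point that requires a moment of care is the compatibility of $\tau_\lambda$ with the bi-unitary gcd, but this follows immediately from it being a ring automorphism.
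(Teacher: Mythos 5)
The paper gives no proof of this lemma at all—it is listed among the preliminary results whose proofs are ``obvious or (well) known'' and omitted—so there is nothing to compare against beyond the standard argument, which is exactly what you give. Your proof is correct: the translation $\tau_\lambda$ is an $\F_4$-algebra automorphism of $\F_4[x]$ preserving monicity, degree, irreducibility, $\gcd$ and hence $\gcd_u$, so it permutes bi-unitary divisors and commutes with $\sigma^{**}$, and applying it to $\sigma^{**}(A)=A$ yields the claim; this is the intended (and essentially the only) route.
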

\begin{lemma} \label{splitcritere}
i) $\sigma^{**}(x^{2k})$ splits over $\F_4$ if and only if \ $2k \in \{2,4,6\}$.\\
ii) $\sigma^{**}(x^{2k+1})$ splits over $\F_4$ if and only if \ $2k+1 =N\cdot 2^n-1$ where $N \in \{1,3\}$.
\end{lemma}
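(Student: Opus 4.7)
The plan is to reduce both parts to a single question: when does $\sigma(x^m) = 1 + x + \cdots + x^m$ split over $\F_4$? By Lemma~\ref{aboutsigmastar2}, $\sigma^{**}(x^{2k+1}) = \sigma(x^{2k+1})$ and $\sigma^{**}(x^{2k}) = (x+1)\,\sigma(x^k)\,\sigma(x^{k-1})$. Since $x+1$ already splits, part~(ii) is a direct statement about $\sigma(x^{2k+1})$, while part~(i) reduces to asking when \emph{both} $\sigma(x^k)$ and $\sigma(x^{k-1})$ split simultaneously.

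To analyse $\sigma(x^m) = (x^{m+1}+1)/(x+1)$, I would write $m+1 = 2^v\, u$ with $u$ odd. Using the Frobenius identity $x^{m+1}+1 = (x^u+1)^{2^v}$ in characteristic~$2$, and the fact that $x+1$ divides the squarefree polynomial $x^u+1$ exactly once (because $u$ is odd), one obtains
\[
\sigma(x^m) = (x+1)^{2^v - 1}\, q(x)^{2^v}, \qquad q(x) := \frac{x^u+1}{x+1}.
\]
Thus $\sigma(x^m)$ splits over $\F_4$ if and only if $q(x)$ does. The roots of $q(x)$ are the $u$-th roots of unity different from $1$; since $|\F_4^{\times}| = 3$, they all lie in $\F_4$ precisely when $u \mid 3$, i.e. $u \in \{1,3\}$. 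So $\sigma(x^m)$ splits over $\F_4$ iff $m+1 \in \mathcal{S} := \{2^v : v \geq 0\} \cup \{3\cdot 2^v : v \geq 0\}$.

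With this characterisation, part~(ii) is immediate: $\sigma^{**}(x^{2k+1})$ splits iff $2k+2 \in \mathcal{S}$, which is exactly the condition $2k+1 = N\cdot 2^n - 1$ with $N \in \{1,3\}$. For part~(i), I need two consecutive positive integers $k$ and $k+1$ to both belong to $\mathcal{S}$. Since consecutive integers have opposite parities and the only odd members of $\mathcal{S}$ are $1$ and $3$, the pair $(k,k+1)$ is forced to contain one of these odd values, so $(k,k+1) \in \{(1,2),(2,3),(3,4)\}$, which gives $2k \in \{2,4,6\}$.

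The only genuinely non-trivial step is the factorisation that rewrites $\sigma(x^m)$ in terms of a cyclotomic-type factor $q(x)$; after that, everything is a short combinatorial check on $\mathcal{S}$. One small point to keep straight is that the Lemma implicitly assumes $k \geq 1$ in part~(i) (the formula for $\sigma^{**}(x^{2k})$ requires $\sigma(x^{k-1})$ to be defined), so the edge case $k=0$ need not be addressed.
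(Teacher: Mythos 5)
Your proof is correct: the reduction of both parts to the splitting of $\sigma(x^m)=(x+1)^{2^v-1}q(x)^{2^v}$ with $m+1=2^vu$, $u$ odd, and the roots-of-unity criterion $u\in\{1,3\}$ (since $|\F_4^{\times}|=3$), followed by the consecutive-integers check for part (i), all hold, and you rightly flag the implicit assumption $k\geq 1$. The paper itself omits any proof of this lemma (it is listed among the ``obvious or known'' preliminaries), and your argument is exactly the standard one underlying the displayed identities (\ref{relation1}), so there is nothing to compare beyond noting agreement.
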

\begin{remark}
{\emph{We get from Lemma \ref{splitcritere} and for an irreducible polynomial~$T$:
\begin{equation} \label{relation1}
\left\{\begin{array}{l}
\sigma^{**}(T^2) = (T+1)^2 \ \ \ \ \ \ \ \ \ \ \ \ \ \ \ \ \ \ \ \ \  \ \ \ \ \ \ \ \ \ \ \ \ \ \ \ \ \ \ \ \ \ (i)\\
\sigma^{**}(T^4) = (T+1)^2 (T+\alpha)(T+\alpha+1) \ \ \ \ \ \ \ \ \ \ \ \ \ \ \ \ \  (ii)\\
\sigma^{**}(T^6) = (T+1)^4 (T+\alpha)(T+\alpha+1) \ \ \  \ \ \ \ \ \ \ \ \ \ \ \ \ \ (iii)\\
\sigma^{**}(T^{2^n-1}) = (T+1)^{2^n-1}\ \ \ \ \ \ \ \ \ \ \ \ \ \ \ \ \ \ \ \ \ \ \ \ \ \ \ \ \ \ \ \ \ \ (iv)\\
\sigma^{**}(T^{3 \cdot 2^n-1}) = (T+1)^{2^n-1}(T+\alpha)^{2^n}(T+\alpha+1)^{2^n} \ \ (v)
\end{array}
\right.
\end{equation}}}
\end{remark}
We sometimes use the above equalities for a suitable $T$. 
\section{Proof of Theorem \ref{casebupsplit}}
This theorem is already stated in \cite{Gall-Rahav-split-Fp2}. We do not rewrite its proof.

Lemma \ref{perfectresult} below completes Theorem 3.4 in \cite{Gall-Rahav-F4}, where  one family of splitting perfect polynomials over $\F_4$ was missing.
 
See \cite{Gall-Rahav3} and \cite{Gall-Rahav8} for the non-splitting case.
\begin{lemma} \label{perfectresult}
The polynomial $x^h(x+1)^k(x+\alpha)^l(x+\alpha+1)^t$ is perfect over $\F_4$ if and only if one of the following conditions is satisfied:\\
i) $h=k=2^n-1,\ l=t=2^m-1$, for some $n,m \in \N$,\\
ii) $h=k=l=t=N \cdot 2^n-1$, for some $n\in \N$ and $N \in \{1,3\}$,\\
iii) $h=l=3 \cdot 2^r-1, \ k=t=2\cdot 2^r-1$, for some $r \in \N$,\\
iv) $h=k=3\cdot 2^r-1,\ l=6\cdot 2^r-1, t = 4 \cdot 2^r-1$ for some $r \in \N$.
\end{lemma}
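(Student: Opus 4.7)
The plan is to combine the multiplicativity of $\sigma$ with the two explicit factorizations of $\sigma(T^n)$ that occur when it splits over $\F_4$.

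Writing $A = \prod_T T^{e_T}$ for $T \in \{x, x+1, x+\alpha, x+\alpha+1\}$ with $(e_T) = (h,k,l,t)$, the identity $A = \sigma(A) = \prod_T \sigma(T^{e_T})$ forces each factor $\sigma(T^{e_T})$ to split over $\F_4$ (since $A$ itself already splits). A short computation via $T^{n+1}-1 = (T^m-1)^{2^a}$ (where $n+1 = 2^a m$, $m$ odd) together with the fact that $T^m - 1$ splits over $\F_4$ iff $m \mid 3$, shows that $\sigma(T^n)$ splits over $\F_4$ exactly when $n+1 = \epsilon \cdot 2^s$ with $\epsilon \in \{1,3\}$, in which case
\[
\sigma(T^{2^s-1}) = (T+1)^{2^s-1}, \quad \sigma(T^{3\cdot 2^s -1}) = (T+1)^{2^s - 1}(T+\alpha)^{2^s}(T+\alpha+1)^{2^s}.
\]
These are the odd-exponent analogues of (iv) and (v) in (\ref{relation1}).

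I would then write $e_i = \epsilon_i\cdot 2^{s_i} - 1$ with $\epsilon_i\in \{1,3\}$ and set $\delta_i = (\epsilon_i - 1)/2 \in \{0,1\}$. Matching, for each $a \in \F_4$, the multiplicity of $x+a$ on the two sides of $A = \sigma(A)$ yields four linear relations which, after pairwise sums and differences, are equivalent to
\[
(\epsilon_1+1)2^{s_1} = (\epsilon_2+1)2^{s_2},\ \ (\epsilon_3+1)2^{s_3} = (\epsilon_4+1)2^{s_4},\ \ \delta_1 2^{s_1} + \delta_2 2^{s_2} = \delta_3 2^{s_3} + \delta_4 2^{s_4}.
\]
The first two equations, for each choice of $(\epsilon_1,\epsilon_2)$ and $(\epsilon_3,\epsilon_4)$ in $\{1,3\}^2$, pin down $s_1, s_2$ (resp.\ $s_3, s_4$) up to a shift by $0$ or $1$; the third is then resolved by uniqueness of the $2$-adic representation of an integer (e.g.\ $2^a + 2^b = 2^c + 2^d$ forces $\{a,b\} = \{c,d\}$).

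The proof reduces to a finite case analysis on the $16$ possibilities for $(\epsilon_1,\epsilon_2,\epsilon_3,\epsilon_4) \in \{1,3\}^4$. The tuples $(1,1,1,1)$, $(3,3,3,3)$, $(3,1,3,1)$ and $(3,3,3,1)$ produce respectively families (i), (ii) (with $N=3$), (iii) and (iv). Each of the remaining twelve subcases is either immediately contradictory (typically giving an equation of the shape $0 = 2^s$) or reduces, through Lemma~\ref{translation} and the Galois action $\alpha \mapsto \alpha+1$, to one of the four canonical families; the case $(\epsilon_1,\epsilon_2,\epsilon_3,\epsilon_4) = (3,3,3,1)$ is precisely the family overlooked in Theorem 3.4 of \cite{Gall-Rahav-F4}. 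The main obstacle is the bookkeeping for these $16$ subcases, and checking that (iv) really is a new family rather than a translate of (iii): the separation is guaranteed by the fact that the multiset $\{h,k,l,t\}$ in (iv) contains the three distinct values $3\cdot 2^r - 1,\ 4\cdot 2^r - 1,\ 6\cdot 2^r - 1$, whereas (iii) uses only the two values $3\cdot 2^r - 1$ and $2\cdot 2^r - 1$.
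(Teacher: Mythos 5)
Your argument is correct, and its engine is the same as the paper's: classify the admissible exponents as $N\cdot 2^s-1$ with $N\in\{1,3\}$, expand $\sigma(A)=\prod_T\sigma(T^{e_T})$ factor by factor, and match multiplicities of the four linear factors to get equations among powers of $2$. I checked the key step: your four matching relations do reduce, by taking sums and differences (and using $\epsilon_i=2\delta_i+1$), exactly to $(\epsilon_1+1)2^{s_1}=(\epsilon_2+1)2^{s_2}$, $(\epsilon_3+1)2^{s_3}=(\epsilon_4+1)2^{s_4}$ and $\delta_12^{s_1}+\delta_22^{s_2}=\delta_32^{s_3}+\delta_42^{s_4}$, and the $16$ sign patterns then sort out as you say (six give $0=2^s$, four give the normal forms i)--iv), six are translation/Frobenius images of these). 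Where you differ from the paper is in self-containedness and organization: the paper imports the exponent-form fact from Lemma 2.7 of \cite{Gall-Rahav-F4} rather than rederiving it from the splitting of $\sigma(T^n)$ as you do, reduces the $16$ congruence patterns to four by symmetry, cites the proof of Theorem 3.4 of \cite{Gall-Rahav-F4} for three of them, and only carries out the exponent-matching computation explicitly for the new pattern $(3,3,3,1)$ -- precisely your family iv). Your uniform three-equation reduction buys a single mechanical check of all cases without leaning on the earlier paper, at the cost of some bookkeeping the paper avoids by citation; note also that both treatments (yours and the paper's) establish the statement only up to the translation and Frobenius symmetries, e.g. the perfect polynomial $x^2(x+1)^2(x+\alpha)^3(x+\alpha+1)^5$ from your case $(3,3,1,3)$ is listed in iv) only after swapping $l$ and $t$, so your appeal to Lemma \ref{translation} and the Galois action is doing the same normalizing work as the paper's symmetry reduction.
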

\begin{proof}
Put $A = x^h(x+1)^k(x+\alpha)^l (x+\alpha+1)^t$. The sufficiency is obtained by direct computations.
 
For the necessity, we recall the following facts in  (\cite{Gall-Rahav-F4}, Lemma 2.7):
\begin{lemma}
Let $a \in \{h,k,l,t\}$. Then, $a = 3 \cdot 2^w-1$ if $a \equiv 2 \mod 3$ and  $a = 2^w-1$ if $a \not\equiv 2 \mod 3$, with $w \in \N$.\\
In particular, $a = 2$ if  $(a$ is even and  $a \equiv 2 \mod 3)$.
\end{lemma}

We take account of Lemmas 2.2, 2.3, 2.6 and 2.7 in \cite{Gall-Rahav-F4}, for the congruency modulo $3$ of each exponent. We get 16 possible cases according to $a \equiv 2 \mod 3$ or not.

Moreover, by Lemma 2.2 in \cite{Gall-Rahav-F4}, the $3$ maps $x \mapsto x+1$, $x \mapsto x+\alpha$ and $x \mapsto x+\alpha+1$ preserve 
 ``perfection''. 

Therefore, $h$ and $k$ (resp. $h$ and $l$, $h$ and $t$) play symmetric roles. It remains  4 cases:
$$\begin{array}{l}
i) \ h,k \not\equiv 2 \mod 3\\
ii) \ h,l \equiv 2 \mod 3 \mbox{ and } k,t \not\equiv 2 \mod 3\\
iii) \ h,k,l,t \equiv 2 \mod 3\\
iv) \ h,k,l \equiv 2 \mod 3 \mbox{ and } t \not\equiv 2 \mod 3.
\end{array}$$

The first three of them are already treated in the proof of (\cite{Gall-Rahav-F4}, Theorem 3.4). We got the families i), ii) and iii) in Lemma \ref{perfectresult}. 

Now, for the case iv), we may write: 
$$\mbox{$h = 3\cdot 2^r-1,\ k = 3\cdot 2^s-1,\ l = 3\cdot 2^u-1$ et $t = 2^v-1$, where 
$r,s,u,v \in \N$.}$$
Compute $\sigma(A) = \sigma(x^h) \cdot \sigma((x+1)^k) \cdot \sigma((x+\alpha)^l) \cdot \sigma((x+\alpha+1)^t)$. 
$$\begin{array}{l}
\sigma(x^h) = (x+1)^{2^r-1} \cdot (x+\alpha)^{2^r} \cdot (x+\alpha+1)^{2^r}\\
\sigma((x+1)^k) = x^{2^s-1} \cdot (x+\alpha)^{2^s} \cdot (x+\alpha+1)^{2^s}\\
\sigma((x+\alpha)^l) = (x+\alpha+1)^{2^u-1} \cdot x^{2^u} \cdot (x+1)^{2^u}\\
\sigma((x+\alpha+1)^t) = \sigma((x+\alpha+1)^{2^v-1}) = (x+\alpha)^{2^v-1}.
\end{array}$$
Since $\sigma(A) = A$, by comparing  exponents in $A$ and those of in $\sigma(A)$, we get:
$$\begin{array}{l}
2^u + 2^s-1 = h = 3 \cdot 2^r-1\\
2^u + 2^r-1 = k = 3 \cdot 2^s-1\\
2^r + 2^s + 2^v-1 = l = 3 \cdot 2^u-1\\
2^u + 2^s+2^u-1 = t = 2^v-1
\end{array}$$
It follows that $s= r, \ u = r+1$ et $v = r+2$. Thus, $h=k=3\cdot 2^r-1$, $l = 3 \cdot 2^{r+1}-1$ and
$t = 2^{r+2}-1$. We obtain the family iv).
\end{proof}

\section{Proof of Theorem \ref{casebupnon-split}}
The sufficiency is obtained by direct computations. Propositions \ref{propo-omega=2}, \ref{propo-omega=3} and \ref{lesexposants23} give the necessity. 

As usual, $\omega(S)$ denotes the number of distinct irreducible factors of a polynomial $S$.
\subsection{Case $\omega(A)=2$} \label{caseomega=2}
Put $A = P^hQ^k$ with $\deg(P) \leq \deg(Q)$.
\begin{proposition} \label{propo-omega=2}
If $A$ is b.u.p., then $Q=P+1$ and either $(h=k=2)$ or $(h=k=2^r-1$, for some $r \in \N)$.
\end{proposition}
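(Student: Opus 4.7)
The plan is to extract the shape of $Q$ first, then use the multiplicativity of $\sigma^{**}$ to reduce the problem to a polynomial identity in a single variable $T$, and finally read off $(h,k)$ from the explicit formulas in~(\ref{relation1}).

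First, by Lemma~\ref{aboutsigmastar2}, $P+1 \mid \sigma^{**}(P^h)$, and since $\sigma^{**}(P^h)$ divides $\sigma^{**}(A)=A=P^hQ^k$ while $\gcd(P,P+1)=1$, the monic polynomial $P+1$ divides $Q^k$, hence $P+1=Q^j$ for some $j\ge 1$. Comparing degrees with the hypothesis $\deg P\le \deg Q$ forces $j=1$, so $Q=P+1$.

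Thus $A=P^h(P+1)^k$, and multiplicativity gives $\sigma^{**}(P^h)\,\sigma^{**}((P+1)^k)=P^h(P+1)^k$. Since $P\nmid\sigma^{**}(P^h)$ and $P+1\nmid\sigma^{**}((P+1)^k)$ by Lemma~\ref{aboutsigmastar2}, unique factorization forces the clean split $\sigma^{**}(P^h)=(P+1)^k$ and $\sigma^{**}((P+1)^k)=P^h$. I would then promote the first evaluated identity to a polynomial identity in $\F_4[T]$: write $\sigma^{**}(T^h)=(T+1)^m R(T)$ with $R$ monic and $R(1)\ne 0$. Substituting $T=P$ gives $(P+1)^m R(P)=(P+1)^k$, and since $R(P)\equiv R(1)\not\equiv 0\pmod{P+1}$, the factor $R(P)$ is coprime to $P+1$, forcing $R(P)=1$ and $m=k$. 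As $\deg R(P)=\deg R\cdot\deg P$ with $\deg P\ge 1$, the monic polynomial $R$ must be the constant $1$. Hence $\sigma^{**}(T^h)=(T+1)^k$ holds identically in $\F_4[T]$.

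Finally, because $(T+1)^k$ splits in $\F_4[T]$, Lemma~\ref{splitcritere} restricts $h$ to the set $\{2,4,6\}\cup\{N\cdot 2^n-1:N\in\{1,3\},\,n\in\N\}$. Inspecting the five explicit formulas in~(\ref{relation1}), the cases $h\in\{4,6\}$ and $h=3\cdot 2^n-1$ with $n\ge 1$ all introduce the factor $T+\alpha$ or $T+\alpha+1$, which contradicts $\sigma^{**}(T^h)=(T+1)^k$. Only $h=2$ (giving $\sigma^{**}(T^2)=(T+1)^2$, so $k=2$) and $h=2^n-1$ (giving $\sigma^{**}(T^{2^n-1})=(T+1)^{2^n-1}$, so $k=h=2^n-1$) survive, and by the symmetric check one verifies that in each case the identity $\sigma^{**}((P+1)^k)=P^h$ is automatic. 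The main obstacle is the promotion step in paragraph three, where a priori some nontrivial $R(T)$ could conspire to collapse to a power of $P+1$ after substitution; the reduction modulo $P+1$ rules this out cleanly.
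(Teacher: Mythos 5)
Your proof is correct, and it reaches the conclusion by a somewhat different route than the paper. The paper's own proof also starts from the coprimality observation (so that $\sigma^{**}(P^h)$ and $\sigma^{**}(Q^k)$ must each be a pure power of the other prime), but it then splits by parity: when $h$ and $k$ are both odd it notes $\sigma^{**}=\sigma$, so $A$ is perfect, and it imports the known classification of perfect polynomials with two prime factors to get $Q=P+1$ and $h=k=2^r-1$; when $h$ is even it settles $h=2$ directly and kills $h\ge 4$ by the count $\omega(\sigma^{**}(P^h))\ge 2$. You instead obtain $Q=P+1$ uniformly (from $P+1\mid A$, coprimality with $P$, and the degree hypothesis), then promote the evaluated equality $\sigma^{**}(P^h)=(P+1)^k$ to the polynomial identity $\sigma^{**}(T^h)=(T+1)^k$ via the reduction $R(P)\equiv R(1)\pmod{P+1}$, and finally let Lemma~\ref{splitcritere} together with the explicit list~(\ref{relation1}) eliminate $h\in\{4,6\}$ and $h=3\cdot 2^n-1$, leaving exactly $h=2$ (with $k=2$) and $h=2^n-1$ (with $k=h$). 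Your promotion step is justified correctly (the substitution $T=P$ is exactly the use of Lemma~\ref{aboutsigmastar2} and~(\ref{relation1}) for a suitable $T$, which the paper itself licenses), and what your route buys is self-containedness: it avoids citing the perfect-polynomial classification for the all-odd case and replaces the unexplained $\omega$-bound for even $h\ge 4$ by an appeal to Lemma~\ref{splitcritere}, at the modest cost of the extra promotion argument.
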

\begin{proof}
We get: 
$$\mbox{$\sigma^{**}(P^h) \sigma^{**}(Q^k) = \sigma^{**}(A)=A=P^hQ^k$ and $\omega(\sigma^{**}(P^h)) = 1 = \omega(\sigma^{**}(Q^k))$.}$$
If $h$ and $k$ are both odd, then $A$ is perfect so that $Q = P+1$ and $h=k=2^r-1$ for some $r \in \N$.\\
Now, we may suppose that $h$ is even. If $h=2$, then $\sigma^{**}(P^h) = (1+P)^2$. Since $P$ does not divide $\sigma^{**}(P^h)$, one has 
$Q^k = (1+P)^2$ and thus $Q= P+1$ and $k=2$. If $h \geq 4$, then $\omega(\sigma^{**}(P^h)) \geq 2$, which is impossible.
\end{proof}
\subsection{Case $\omega(A)=3$} \label{caseomega=3}
Put $A = P^hQ^kR^l$ with $\deg(P) \leq \deg(Q) \leq \deg(R)$.  Suppose that 
$$\sigma^{**}(P^h) \sigma^{**}(Q^k)  \sigma^{**}(R^l)= \sigma^{**}(A)=A=P^hQ^kR^l.$$
\begin{lemma} \label{Q=P+1}
The polynomial $P+1$ is irreducible and $Q=P+1$.
\end{lemma}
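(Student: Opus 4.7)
The plan is to show that $P+1$ must divide $A$ and then use degree bounds together with the ordering $\deg P \le \deg Q \le \deg R$ to pin down the shape of $P+1$. First, Lemma~\ref{aboutsigmastar2}(ii) applied to $T = P$ gives $P+1 \mid \sigma^{**}(P^h)$. The b.u.p.\ hypothesis and multiplicativity of $\sigma^{**}$ then yield $\sigma^{**}(P^h) \mid \sigma^{**}(A) = A = P^h Q^k R^l$, so $P+1 \mid A$. Since $\gcd(P, P+1) = 1$, we in fact have $P+1 \mid Q^k R^l$, which forces every irreducible factor of $P+1$ to equal $Q$ or $R$.

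Next I would run a degree comparison. Any irreducible factor $T$ of $P+1$ satisfies $\deg T \le \deg(P+1) = \deg P$, while $T \in \{Q, R\}$ combined with our ordering gives $\deg T \ge \deg Q \ge \deg P$. Both inequalities must be equalities, so $\deg T = \deg P = \deg Q$. Since the degrees of the irreducible factors of $P+1$ (with multiplicity) sum to $\deg(P+1) = \deg P$, there is exactly one such factor and it has multiplicity one: $P+1$ is itself irreducible.

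Hence $P+1 \in \{Q, R\}$. In the case $P+1 = R$, the previous step forces $\deg R = \deg P = \deg Q$, so swapping the labels $Q \leftrightarrow R$ preserves the ordering $\deg P \le \deg Q \le \deg R$ and lets us assume $P+1 = Q$. I expect the only subtlety to be this relabeling; once Lemma~\ref{aboutsigmastar2}(ii) is invoked, what remains is just a short degree count. No parity assumption on $h, k, l$ is needed for this step.
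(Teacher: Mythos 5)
Your proposal is correct and takes essentially the same route as the paper: $P+1$ divides $\sigma^{**}(P^h)$, hence divides $A$, so its irreducible factors lie among $\{Q,R\}$, and the degree ordering $\deg P\le\deg Q\le\deg R$ forces $\deg$-equality and, after the harmless relabeling $Q\leftrightarrow R$, $Q=P+1$. The only difference is that you spell out explicitly the count showing $P+1$ is irreducible, which the paper leaves implicit once it concludes $Q=P+1$.
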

\begin{proof}
The polynomial $1+P$ is divisible by  $Q$ or by $R$, since it divides $\sigma^{**}(P^h)$ (Lemma \ref{aboutsigmastar2}). We may suppose that 
$Q \mid (1+P)$. So, $\deg(Q) = \deg(P)$ and  $Q = P+1$.
\end{proof}

\begin{lemma} \label{omegabistar<3}
One has $\omega(\sigma^{**}(P^h)) \leq 2,\  \omega(\sigma^{**}(Q^k)) \leq 2,\  \omega(\sigma^{**}(R^l))\leq 2$. 
Moreover, if $h$ is even (resp. odd), then $h=2$ (resp. $h=2^r-1$, $r\in \N^*$).
\end{lemma}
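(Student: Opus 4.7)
The plan is to first establish the three $\omega$-bounds via multiplicativity, and then leverage $\omega(\sigma^{**}(P^h)) \le 2$ through a cyclotomic count in $\F_4[T]$ to pin down $h$.

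For the $\omega$-bounds: by multiplicativity of $\sigma^{**}$, the polynomial $\sigma^{**}(P^h)$ divides $A = P^h Q^k R^l$. By Lemma~\ref{aboutsigmastar2}(ii), $P \nmid \sigma^{**}(P^h)$, so every irreducible divisor of $\sigma^{**}(P^h)$ belongs to $\{Q, R\}$, whence $\omega(\sigma^{**}(P^h)) \le 2$. The analogous arguments with $Q$ and with $R$ in place of $P$ give the other two bounds.

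To pin down $h$, I would first pass from factorizations in $\F_4[x]$ to factorizations in $\F_4[T]$. If $\sigma^{**}(T^h) = \prod_i f_i(T)^{e_i}$ is the factorization into distinct monic irreducibles in $\F_4[T]$, then the B\'ezout identities $a_i f_i + a_j f_j = 1$ in $\F_4[T]$, specialized at $T = P$, show that the polynomials $f_i(P) \in \F_4[x]$ are pairwise coprime and nonconstant. Hence the number of distinct irreducibles of $\sigma^{**}(T^h)$ in $\F_4[T]$ is at most $\omega(\sigma^{**}(P^h)) \le 2$. Using then the identities $\sigma(T^h) = (T^{h+1}+1)/(T+1)$ (for odd $h$) and $\sigma^{**}(T^{2n}) = (T^{n+1}+1)(T^n+1)/(T+1)$ (for $h = 2n$), together with the characteristic-$2$ factorization $T^m + 1 = \prod_{d \mid m'} \Phi_d(T)^{2^{v_2(m)}}$ (where $m'$ denotes the odd part of $m$), the distinct irreducibles of $\sigma^{**}(T^h)$ are exactly $\Phi_1 = T+1$ together with the irreducible factors in $\F_4[T]$ of the relevant $\Phi_d$ for $d > 1$ odd.

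The arithmetic crux is that for every odd $d > 1$, $\Phi_d$ has at least two distinct irreducible factors over $\F_4$. Indeed, $4 = 2^2$ is a square in $(\Z/d\Z)^{\ast}$, so $\mathrm{ord}_d(4)$ divides $\phi(d)/2$, and $\Phi_d$ splits over $\F_4$ into $\phi(d)/\mathrm{ord}_d(4) \ge 2$ irreducible factors. Given this, the constraint on the number of distinct irreducibles prohibits any odd $d > 1$ from contributing. For $h$ odd this forces the odd part of $h+1$ to equal $1$, i.e., $h = 2^r - 1$ for some $r \ge 1$. For $h = 2n$ even, since $\gcd(n, n+1) = 1$ implies the odd parts of $n$ and $n+1$ are coprime, the constraint forces both of these odd parts to equal $1$; but the only pair of consecutive powers of $2$ is $(1,2)$, giving $n = 1$ and $h = 2$. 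The main obstacle is the number-theoretic bound that $\Phi_d$ has at least two irreducible factors over $\F_4$ for every odd $d > 1$; once in hand, the rest is clean divisibility bookkeeping.
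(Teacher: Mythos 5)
Your proof is correct, and while the first half coincides with the paper's, the second half takes a genuinely more self-contained route. Like the paper, you get $\omega(\sigma^{**}(P^h))\leq 2$ (and the analogues for $Q,R$) from Lemma \ref{aboutsigmastar2}(ii) plus divisibility of $A$. For the ``moreover'' part, the paper stays inside $\F_4[x]$: it writes $\sigma^{**}(P^{2n})=(1+P)\,\sigma(P^n)\,\sigma(P^{n-1})$ and, for $h=2^ru-1$ with $u$ odd, $\sigma(P^h)=(1+P)^{2^r-1}[\sigma(P^{u-1})]^{2^r}$, and then concludes $n=1$, resp.\ $u=1$, by invoking (essentially without proof, leaning on the earlier $\F_4$ papers) that $\sigma(P^{m})$ has at least two distinct irreducible factors whenever $m\geq 2$ is even. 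You instead transfer the count to $\F_4[T]$ via the B\'ezout/substitution argument (a clean and valid reduction, since $\sigma^{**}(P^h)$ is the evaluation at $T=P$ of $\sigma^{**}(T^h)$), factor $T^m+1$ into cyclotomic polynomials in characteristic $2$, and prove the key arithmetic fact outright: for odd $d>1$, $4$ is a square modulo $d$, so $\mathrm{ord}_d(4)$ divides $\phi(d)/2$ and $\Phi_d$ has $\phi(d)/\mathrm{ord}_d(4)\geq 2$ irreducible factors over $\F_4$; together with the positive exponent of $T+1$ this forces the odd parts of $h+1$ (odd case), resp.\ of $n$ and $n+1$ (even case), to be $1$, yielding $h=2^r-1$ or $h=2$. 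So your argument supplies a rigorous, self-contained justification of exactly the fact the paper uses implicitly ($\omega(\sigma(P^{2w}))\geq 2$ over $\F_4$), at the cost of more cyclotomic machinery; the paper's version is shorter but depends on results cited from prior work.
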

\begin{proof}
Since $P$ does not divide $\sigma^{**}(P^h)$, at most $Q$ and $R$ divide it. Hence,  $\omega(\sigma^{**}(P^h)) \leq 2$. Similarly, we get 
$\omega(\sigma^{**}(Q^k)) \leq 2$ and $\omega(\sigma^{**}(R^l))\leq 2$.\\
- If $h=2n$ is even, then $2 \geq \omega(\sigma^{**}(P^{2n})) = \omega((1+P) \sigma(P^n) \sigma(P^{n-1}))$. So, $n=1$.\\
- If $h$ is odd. Put $h=2^ru-1$, with $u$ odd. One has:
$$2 \geq \omega(\sigma^{**}(P^{2^ru-1})) = \omega((1+P)^{2^r-1} \sigma(P^{u-1})).$$ So, $u=1$ 
because  $\omega(\sigma(P^{u-1}) ) \geq 2$ if $u \geq 3$.
\end{proof}
\begin{proposition} \label{propo-omega=3}
If $h,k$ and $l$ are not all odd, then $A$ is not b.u.p.
\end{proposition}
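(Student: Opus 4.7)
The plan is to reach a contradiction by showing that $R$ cannot divide $\sigma^{**}(A)$ at all, while simultaneously $R \mid A = \sigma^{**}(A)$ by hypothesis. I would first apply Lemma \ref{Q=P+1} to obtain $Q = P+1$, so in particular $\deg P = \deg Q$. Then Lemma \ref{omegabistar<3} forces $h \in \{2\}\cup\{2^r-1 : r \in \N^*\}$, and, since $P$ and $Q$ now have equal degree and each is the other plus $1$, applying the identical argument with $Q$ in place of $P$ yields $k \in \{2\}\cup\{2^s-1 : s \in \N^*\}$. No constraint on $l$ will be needed for what follows.

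The key observation I would then record is that, in every allowed case for $h$, relation (\ref{relation1}) gives $\sigma^{**}(P^h) = (P+1)^h = Q^h$, a pure power of $Q$ (use part (i) when $h=2$ and part (iv) when $h = 2^r-1$). Symmetrically, $\sigma^{**}(Q^k) = P^k$. Neither of these factors is divisible by $R$. Since Lemma \ref{aboutsigmastar2}(ii) also gives $R \nmid \sigma^{**}(R^l)$, we deduce
$$R \nmid \sigma^{**}(P^h)\,\sigma^{**}(Q^k)\,\sigma^{**}(R^l) = \sigma^{**}(A).$$
But $R^l \mid A$ with $l\geq 1$, so $A \neq \sigma^{**}(A)$, the desired contradiction.

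The step I expect to handle most carefully is the symmetric extension of Lemma \ref{omegabistar<3} from $h$ to $k$, because its ``moreover'' clause is stated explicitly only for $h$. The proof of the lemma is purely structural, however: it counts $\omega$ of $\sigma^{**}(P^{2n}) = (1+P)\sigma(P^n)\sigma(P^{n-1})$ when $h$ is even and of $\sigma^{**}(P^{2^r u-1}) = (1+P)^{2^r-1}\sigma(P^{u-1})$ when $h$ is odd; the identical count applied with $Q$ in place of $P$ (using $P = Q+1$) yields the corresponding constraint on $k$. Once this is recorded, the rest reduces to the one-line computation above.

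Finally, I would observe that the hypothesis ``$h,k,l$ not all odd'' is not actually used in the argument; it establishes the stronger statement that no bi-unitary perfect polynomial with $\omega(A) = 3$ exists, which is consistent with the remark following Theorem \ref{casebupnon-split}.
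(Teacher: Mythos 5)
Your proof is correct, and it reorganizes the argument in a genuinely more uniform way than the paper does. The ingredients are the same (Lemma \ref{Q=P+1}, Lemma \ref{omegabistar<3}, Lemma \ref{aboutsigmastar2}(ii) and relations (\ref{relation1})(i),(iv)), but the paper proceeds by a parity case analysis on $(h,k,l)$, comparing the full factorizations of $\sigma^{**}(A)$ and $A$ in each case and concluding either ``$\sigma^{**}(R^l)=R^l$'' or ``$R$ divides $PQ$''; you instead track only the single prime $R$: since $h\in\{2\}\cup\{2^r-1\}$ and $k\in\{2\}\cup\{2^s-1\}$, relations (\ref{relation1}) give $\sigma^{**}(P^h)=Q^h$ and $\sigma^{**}(Q^k)=P^k$, and since $R\nmid\sigma^{**}(R^l)$ for every $l$, the prime $R$ cannot divide $\sigma^{**}(A)$ at all, contradicting $R\mid A$. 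Your handling of the symmetric extension to $k$ is legitimate, because the ``moreover'' part of Lemma \ref{omegabistar<3} only uses the bound $\omega(\sigma^{**}(Q^k))\leq 2$, which the lemma already states for $Q$. What your version buys: no case split on the parity of $k$ or $l$ is needed, and in particular it covers the configuration ($h,k$ odd, $l$ even), which the paper's three listed cases leave implicit; moreover, as you observe, the hypothesis ``$h,k,l$ not all odd'' is never used, so your argument proves directly that no b.u.p.\ polynomial with $\omega(A)=3$ exists, whereas the paper obtains the all-odd case separately via Corollary \ref{expo-all-odd} and the known classification of perfect polynomials. The paper's case-by-case comparison, on the other hand, is closer in spirit to the bookkeeping needed later in the $\omega(A)=4$ analysis, which is presumably why it is written that way.
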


\begin{proof} By Lemma \ref{Q=P+1}, one has $Q= P+1$ and so $A = P^h(P+1)^kR^l$.\\
- If $h,k$ are all even, then $h=k=2$. Therefore,
$$(1+P)^2 (1+Q)^2 \sigma^{**}(R^l) =  \sigma^{**}(A) =A=P^2Q^2  R^l.$$
Hence, $\sigma^{**}(R^l) =R^l$. It is impossible.\\
- If $h$ is even, $k$ odd and $l$ even, then $h=l=2$, $k=2^r-1$. Therefore,
$$Q^2 P^{2^r-1} (1+R)^2 = (1+P)^2 (1+Q)^{2^r-1} (1+R)^2 =  \sigma^{**}(A) =A=P^2Q^{2^r-1}  R^2.$$
Hence, $R$ divides $PQ$. It is impossible.\\ 
- If $h$ is even, $k$ and $l$ odd, then $h=2$, $k=2^r-1$, $l=2^s-1$. One has:
$$Q^2 P^{2^r-1} (1+R)^{2^s-1} = (1+P)^2 (1+Q)^{2^r-1} (1+R)^{2^s-1} =  \sigma^{**}(A) =A=P^2Q^{2^r-1}  R^{2^s-1}.$$
Hence, $R$ divides $PQ$. It is impossible.
\end{proof}

\subsection{Case $\omega(A)=4$} \label{caseomega=4}
Put $A = P^hQ^kR^lS^t$ with $\deg(P) \leq \deg(Q) \leq \deg(R) \leq \deg(S)$. 

We suppose that $A$ is  b.u.p. and indecomposable (i.e.,  neither $P^hQ^k$ nor $R^lS^t$ are b.u.p). 
\begin{lemma} \label{lesPQRS}
One has: $Q = P+1, 1+R = P^{u_1}Q^{v_1}, 1+S =  P^{u_2}Q^{v_2}R^z$ where $u_1,v_1 \geq 1$ and $u_2,v_2, z \geq 0$.\\
Moreover, if $\deg(R) = \deg(S)$ then  $u_2,v_2 \geq 1$ and $z=0$.
\end{lemma}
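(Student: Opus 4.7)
The plan is to extract all claimed identities from $\sigma^{**}(A)=A$ by tracking, for each $T\in\{P,Q,R,S\}$, where the unavoidable divisor $1+T$ of $\sigma^{**}(T^{\bullet})$ (Lemma~\ref{aboutsigmastar2}) must land inside $A$. First I would obtain $Q=P+1$: since $1+P$ divides $\sigma^{**}(P^h)\mid A$ and $P\nmid 1+P$, the irreducible factors of $1+P$ lie in $\{Q,R,S\}$, each of degree at least $\deg P=\deg(1+P)$. Hence $1+P$ is itself irreducible, and the minimal-degree candidate $Q$ is forced to equal it.

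The same reasoning applied to $1+R$, whose irreducible factors lie in $\{P,Q,S\}$, gives the following dichotomy: if $\deg S>\deg R$ then $S\nmid 1+R$ by degree, and so $1+R=P^{u_1}Q^{v_1}$; if $\deg R=\deg S$ the only alternative is $1+R=S$, to be ruled out below. To obtain $u_1,v_1\ge 1$ I would argue by contradiction: if $v_1=0$ then $R=1+P^{u_1}$, and the characteristic-$2$ identities $1+P^{2m}=(1+P^m)^2$ and $1+P^{2m+1}=(1+P)(1+P+\cdots+P^{2m})$, together with $u_1=1\Rightarrow R=Q$, contradict the irreducibility of $R$ in every case; the symmetric relation $1+Q=P$ excludes $u_1=0$. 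An identical analysis for $1+S$ yields $1+S=P^{u_2}Q^{v_2}R^z$ with $u_2,v_2,z\ge 0$; in the sub-case $\deg R=\deg S$, a positive $z$ forces $1+S=R$ by degree, hence $S=R+1=P^{u_1}Q^{v_1}$ with $u_1+v_1\ge 2$, contradicting the irreducibility of $S$. So $z=0$ and $u_2,v_2\ge 1$ follow as before.

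The remaining hurdle, and the main obstacle, is ruling out $1+R=S$ in the $\deg R=\deg S$ subcase. In that configuration $\{P,Q\}$ and $\{R,S\}$ are disjoint irreducible pairs, $A=(P^hQ^k)(R^lS^t)$ is a product of coprime factors, and multiplicativity gives $\sigma^{**}(P^hQ^k)\sigma^{**}(R^lS^t)=P^hQ^kR^lS^t$. Using the formulas in~\eqref{relation1}, any irreducible factor of $\sigma^{**}(P^h)$ or $\sigma^{**}(Q^k)$ that escapes $\{P,Q\}$ is a degree-$\deg P$ polynomial of the form $P+\lambda$ with $\lambda\in\{\alpha,\alpha+1\}$; such a factor can coincide with $R$ or $S=R+1$ only if $\deg R=\deg P$, in which case $A$ drops into a ``splitting-in-$P$'' configuration whose b.u.p.\ options are classified by Theorem~\ref{casebupsplit} and each one is decomposable. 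Otherwise $h,k,l,t\in\{2\}\cup\{2^r-1:r\in\N^*\}$, where~\eqref{relation1} gives $\sigma^{**}(P^h)=Q^h$, $\sigma^{**}(Q^k)=P^k$, and analogues for $R,S$; matching exponents in $\sigma^{**}(A)=A$ yields $h=k$ and $l=t$, so Proposition~\ref{propo-omega=2} forces $P^hQ^k$ to be b.u.p., contradicting indecomposability. This final step, where indecomposability is genuinely invoked, is the delicate heart of the argument.
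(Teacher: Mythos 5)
Your treatment of the identities themselves follows the paper's route: $1+P\mid A$ plus the degree ordering gives $Q=P+1$; the same applied to $1+R$ and $1+S$ gives the stated factorizations; irreducibility of $R$ excludes $u_1v_1=0$; and your observation that, when $\deg R=\deg S$, a positive $z$ would force $1+S=R$, hence $S=R+1=P^{u_1}Q^{v_1}$, contradicting irreducibility of $S$, is exactly the right way to get the ``Moreover'' clause. You also correctly isolate the one delicate point, namely the possibility $1+R=S$ when $\deg R=\deg S$ (the paper's ``$u_3=0$ because $\deg R\le\deg S$'' only covers strict inequality), and you are right that indecomposability must enter there.

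However, your exclusion of $1+R=S$ has a genuine gap. You assert, citing \eqref{relation1}, that every irreducible factor of $\sigma^{**}(P^h)$ or $\sigma^{**}(Q^k)$ outside $\{P,Q\}$ has the form $P+\lambda$ with $\lambda\in\{\alpha,\alpha+1\}$. But \eqref{relation1} covers only the exponents $2,4,6,2^n-1,3\cdot 2^n-1$, and at this stage nothing restricts $h,k$: for instance $\sigma^{**}(P^{14})=(1+P)^8(P^3+P+1)(P^3+P^2+1)$, and $\sigma^{**}(P^{8})$ contains $\sigma(P^4)=(P^2+\alpha P+1)(P^2+\alpha^2P+1)$, none of whose irreducible factors is a $P+\lambda$. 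So your dichotomy (either a $P+\lambda$ escapes, or $h,k,l,t\in\{2\}\cup\{2^r-1\}$) is not exhaustive, and the ``Otherwise'' branch --- the only place where $\sigma^{**}(P^h)=Q^h$, $h=k$ and the contradiction with indecomposability are obtained --- is unjustified. The admissible exponents are only pinned down later (Lemmas \ref{caseheven2} and \ref{casehodd2}), and those rely on Lemma \ref{lesPQRS} via Lemma \ref{caspair=6-2}, so importing them here would be circular; a direct exclusion of $S=R+1$ must also dispose of configurations such as $RS=\sigma(P^{6})$ (where one checks $(P^3+P+1)+1=P(P+1)^2$ is reducible, so indeed $S\neq R+1$), and their analogues for other even or odd exponents. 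Your secondary step is also shaky: the appeal to Theorem \ref{casebupsplit} requires justifying the substitution $x\mapsto P$, and with the operational notion of indecomposability used in this section (neither $P^hQ^k$ nor $R^lS^t$ b.u.p.) the entries of Table (\ref{expovalues}), e.g.\ $(4,3,3,4)$, are not decomposable; the clean way to kill the $R=P+\alpha$, $S=P+\alpha+1$ scenario is that for $\deg P=1$ the polynomial $A$ would split, contrary to hypothesis, while for $\deg P\ge 2$ the four translates $P,P+1,P+\alpha,P+\alpha+1$ cannot all be irreducible (for any $c\in\F_4$, $P+P(c)$ has the root $c$), as the paper uses in Lemma \ref{caseheven2}.
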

\begin{proof}
The polynomial $1+P$ divides $\sigma^{**}(A) = A$, so $Q$ divides $1+P$ and thus, $Q = 1+P$ because $\deg(P) \leq \deg(Q)$.\\
Now, $1+R$ divides $\sigma^{**}(A) = A$, so $1+R = P^{u_1}Q^{v_1}S^{u_3}$ and $u_3 = 0$ because  $\deg(R) \leq \deg(S)$. 
Since $R = P^{u_1}Q^{v_1} +1$ is irreducible, we conclude that $u_1, v_1 \geq 1$ and $\gcd(u_1,v_1) = 1$.  By the same reason,   $1+S =  P^{u_2}Q^{v_2}R^z$ where $u_2,v_2, z \geq 0$ and $z$ may be positive.
\end{proof}
\subsubsection{Case $\deg(P) =1$} \label{omega=4-one-splits}
We may suppose that $P = x$. Lemma \ref{Q=P+1} implies that $Q = x+1$. Moreover, $\deg(S)= 1$ if $\deg(R) = 1$. So, $\deg(S) \geq \deg(R) > 1$.\\
We write: $A = x^h(x+1)^kR^lS^t$. The exponents $h$ and $k$ play symmetric roles.
\begin{lemma}  [\cite{Gall-Rahav3}, Lemma 2.6] \label{caspair=6}
If $1+x+\cdots+ x^{2w} = UV$, then $\deg(U) = \deg(V)$ and $U(0) = 1 = V(0)$. \\
Moreover, if $R$ and $S$ are both of the form $x^{u_1} (x+1)^{v_1} + 1$, then $2w=6$ and $U, V \in \{x^3+x+1, x^3+x^2+1\}$.
\end{lemma}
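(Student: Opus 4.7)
The plan rests on the reciprocal symmetry of $f(x) := 1+x+\cdots+x^{2w}$, namely $x^{2w} f(1/x) = f(x)$. Writing $f = UV$ with $U, V$ monic in $\F_4[x]$ and introducing the monic reciprocal $U^*(x) := U(0)^{-1} x^{\deg U} U(1/x)$, reciprocation of both sides gives $f = U^* V^*$ together with the constant-term identity $U(0) V(0) = f(0) = 1$. Unique factorization in $\F_4[x]$ forces $\{U, V\} = \{U^*, V^*\}$. In the context at hand the relevant factors have the shape $x^u(x+1)^v+1$, which is generally not self-reciprocal, so the alternative forcing a lopsided self-reciprocal split is excluded and one must have $V = U^*$; this immediately yields $\deg U = \deg V = w$. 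The conclusion $U(0) = V(0) = 1$ then follows because each $x^u(x+1)^v+1$ evaluates to $1$ at $x = 0$.

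For the moreover claim, assume $R = x^{u_1}(x+1)^{v_1}+1$ and $S = x^{u_2}(x+1)^{v_2}+1$ are both irreducible with $u_i, v_i \geq 1$. By the first part applied to $RS = f$, one has $\deg R = \deg S = w$ and hence $u_i + v_i = w$ for $i = 1, 2$. I would then eliminate small $w$ directly: $w \leq 1$ is incompatible with $u_i, v_i \geq 1$; $w = 2$ admits only $x(x+1)+1 = x^2+x+1 = (x+\alpha)(x+\alpha+1)$ over $\F_4$, contradicting the irreducibility of $R$; $w = 3$ yields exactly the pairs $(u,v) = (1,2)$ and $(2,1)$, giving $R = x^3+x+1$ and $S = x^3+x^2+1$, both irreducible, and a short expansion confirms $RS = \sigma(x^6) = 1+x+\cdots+x^6$, matching the stated conclusion.

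The main obstacle is ruling out $w \geq 4$. For $w = 4$ the three shape candidates $x(x+1)^3+1 = \sigma(x^4)$, $x^2(x+1)^2+1 = (x^2+x+1)^2$ (not irreducible), and $x^3(x+1)+1$ are enumerated, and no pairwise product equals $\sigma(x^8)$. For $w \geq 5$ a uniform argument is required; I would exploit the cyclotomic factorization $\sigma(x^{2w}) = (x^{2w+1}-1)/(x-1)$ over $\F_4$, whose irreducible factors have degrees equal to the multiplicative orders of $4$ modulo divisors of $2w+1$. One then shows that these degrees, together with the rigid structure of factors of the form $x^u(x+1)^v+1 \in \F_2[x]$, are incompatible with a factorization of $\sigma(x^{2w})$ into exactly two such irreducibles of joint degree $2w$. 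This uniform incompatibility is the technical heart of the lemma as proved in \cite{Gall-Rahav3}.
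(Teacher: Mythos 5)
First, note that the paper does not actually prove this lemma: it is imported verbatim from \cite{Gall-Rahav3} (Lemma 2.6), so there is no internal argument to compare yours against; your attempt has to stand on its own.

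Judged on its own, the attempt has a genuine gap at exactly the point you flag yourself: the case $w\geq 5$ of the ``moreover'' statement. Everything up to $w=4$ is an enumeration, and for $w\geq 5$ you only describe a strategy (``one then shows that these degrees \dots are incompatible'') and then defer to \cite{Gall-Rahav3}; that is the technical heart of the lemma and it is not carried out. The classical route (Canaday's, which is what the cited lemma descends from) does not go through cyclotomic coset degrees at all: one writes $(x+1)RS=x^{2w+1}+1$ with $R=x^{u_1}(x+1)^{v_1}+1$, $S=x^{u_2}(x+1)^{v_2}+1$, expands, and compares lowest-degree terms to force $u_1$ or $u_2$ to be $1$ and eventually $2w=6$; if you want a self-contained proof, that is the argument you need to supply. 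There are also two smaller soft spots in the first part: (i) the identity $U(0)V(0)=f(0)=1$ does not give $U(0)=V(0)=1$ over $\F_4$, since $\alpha\cdot(\alpha+1)=1$ (you recover $U(0)=1$ only from the special shape $x^u(x+1)^v+1$, so the first sentence is not established in the generality stated); and (ii) the dichotomy $\{U,V\}=\{U^*,V^*\}$ requires $U,V$ irreducible (true in context but unstated), and the branch where $U$ and $V$ are each self-reciprocal is dismissed with ``generally not self-reciprocal,'' which is not accurate --- $x^u(x+1)^u+1$ \emph{is} self-reciprocal for every $u$ --- so that branch needs to be excluded by a real argument rather than by assertion.
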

\begin{lemma} \label{caseheven}
If $h$ is even, then $h \in \{2,14\}.$ Moreover, $R,S \in \{x^3+x+1, x^3+x^2+1\}$ if $h = 14$.
\end{lemma}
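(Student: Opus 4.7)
Setting $h=2n$, the formula $\sigma^{**}(x^{2n})=(1+x)\sigma(x^n)\sigma(x^{n-1})$ from Lemma \ref{aboutsigmastar2}, combined with the facts that $\sigma^{**}(x^h)$ divides $A$, is coprime to $x$, and has all irreducible factors in $\{x+1,R,S\}$ (since $\omega(A)=4$), immediately constrains $n$. Because $\deg R,\deg S\ge 2$, the only admissible degree-one factor of $\sigma^{**}(x^h)$ is $x+1$; in particular $x+\alpha$ and $x+\alpha+1$ cannot appear, which forces $3\nmid n$ and $3\nmid n+1$, i.e., $n\equiv 1\pmod 3$. Moreover $\gcd(n,n+1)=1$ makes $\sigma(x^n)$ and $\sigma(x^{n-1})$ coprime, so each of $R,S$ divides exactly one of them.

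The technical backbone of the argument is a dichotomy that I would establish: for every $m\ge 1$, either $\sigma(x^m)$ is a pure power of $(x+1)$ (which happens precisely when $m+1$ is a power of $2$), or $\sigma(x^m)$ has at least two distinct non-$(x+1)$ irreducible factors over $\F_4$. Writing $m+1=u\cdot 2^c$ with $u$ odd, one has $\sigma(x^m)=(x+1)^{2^c-1}\sigma(x^{u-1})^{2^c}$, and the squarefree polynomial $\sigma(x^{u-1})=\prod_{d\mid u,\,d>1}\Phi_d$ supplies all non-$(x+1)$ factors. For $u>1$ odd, this product has $\ge 2$ distinct factors over $\F_4$: if $u$ is composite, two different $\Phi_d$'s already appear; if $u$ is an odd prime, then $\mathrm{ord}_u(4)\le(u-1)/2$ because $4$ is a nontrivial square modulo $u$, so $\Phi_u$ splits into at least two pieces over $\F_4$. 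Since $\sigma(x^n)\sigma(x^{n-1})$ may contribute only the two irreducibles $R,S$, the dichotomy forces at least one of $\sigma(x^n),\sigma(x^{n-1})$ to be a pure $(x+1)$-power.

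By symmetry I would assume $\sigma(x^n)=(x+1)^n$, so $n=2^c-1$ for some $c\ge 1$. The case $c=1$ gives $h=2$. For $c\ge 2$, the non-$(x+1)$ irreducibles of $\sigma^{**}(x^h)$ all come from $\sigma(x^{2^c-2})=\prod_{d\mid 2^c-1,\,d>1}\Phi_d$, which must decompose over $\F_4$ into exactly two distinct irreducibles of degree $\ge 2$, both of the form $x^u(x+1)^v+1$ as required by Lemma \ref{lesPQRS}. A short inspection handles the remaining values: $c=2$ gives $\Phi_3=(x+\alpha)(x+\alpha+1)$ (degree one, excluded); $c=3$ gives $\Phi_7=(x^3+x+1)(x^3+x^2+1)$, yielding $h=14$ with the stated $R,S$ (and one checks $x^3+x+1=x(x+1)^2+1$, $x^3+x^2+1=x^2(x+1)+1$); and $c\ge 4$ is ruled out because either $c$ is even so $3\mid 2^c-1$ and $\Phi_3$ reintroduces forbidden degree-one factors, or $c$ is odd and at least two distinct $\Phi_d$'s appear, each already splitting over $\F_4$. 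The most delicate sub-case will be Mersenne primes $2^c-1$ with $c\ge 5$ odd prime, where I would use $\mathrm{ord}_{2^c-1}(4)=c$ to show that $\Phi_{2^c-1}$ splits into $(2^c-2)/c\ge 6$ factors over $\F_4$, far exceeding the allowed count of two.
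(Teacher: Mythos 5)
Your cyclotomic dichotomy (either $\sigma(x^m)$ is a pure power of $x+1$, i.e.\ $m+1=2^c$, or it has at least two distinct non-$(x+1)$ irreducible factors over $\F_4$) is correct, as are the mod~$3$ filter and the Mersenne analysis in the branch $n=2^c-1$; this would give a self-contained substitute for the paper's appeal to Lemma \ref{caspair=6}. But the step ``by symmetry I would assume $\sigma(x^n)=(x+1)^n$'' is a genuine gap. The two branches of your dichotomy are not symmetric: if instead $\sigma(x^{n-1})$ is the pure $(x+1)$-power, then $n=2^c$ and $h=2^{c+1}\in\{4,8,16,\dots\}$, and the non-$(x+1)$ factors now come from $\sigma(x^{2^c})=\prod_{d\mid 2^c+1,\,d>1}\Phi_d$, so all of your subsequent arithmetic (divisors of $2^c-1$, $3\mid 2^c-1$ for $c$ even, Mersenne primes) simply does not apply. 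This branch is not vacuous and is not eliminated by anything you proved: for $h=8$ one has $n=4\equiv 1\pmod 3$ (so it passes your mod-$3$ filter) and $\sigma(x^4)=\Phi_5=(x^2+\alpha x+1)(x^2+(\alpha+1)x+1)$ over $\F_4$, i.e.\ exactly two distinct non-$(x+1)$ irreducibles of degree $\ge 2$, so it also passes your counting filter. To kill it you must invoke the structural constraint of Lemma \ref{lesPQRS} that $1+R=x^{u_1}(x+1)^{v_1}$ (here $1+(x^2+\alpha x+1)=x(x+\alpha)$, a contradiction), and for larger $c$ (e.g.\ $c$ even $\ge 4$) you need a separate analysis of the factorization pattern of $\Phi_d$ for $d\mid 2^c+1$. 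None of this is in your proposal.

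For comparison, the paper avoids this asymmetry by splitting on the parity of $n$ and applying the quoted Lemma \ref{caspair=6} to whichever of $\sigma(x^n)$, $\sigma(x^{n-1})$ has even top exponent: that factor must equal $RS$ with $R,S$ of the shape $x^{u}(x+1)^{v}+1$ (via Lemma \ref{lesPQRS}), and the cited lemma then forces that exponent to be $6$, yielding $h=12$ (discarded because $\sigma^{**}(x^{12})$ would make $A$ split) or $h=14$. So the even-$n$ candidates $h=8,16,\dots$ are handled there, whereas your write-up silently discards them; to repair your argument you must treat the $n=2^c$ branch explicitly, using the shape constraint from Lemma \ref{lesPQRS} (or the paper's Lemma \ref{caspair=6}) rather than an appeal to symmetry.
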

\begin{proof}
$\bullet$ If  $h \in \{4,6\}$, then $x+\alpha$ and $x+\alpha+1$ both divide $\sigma^{**}(x^h)$ and thus, they divide $\sigma^{**}(A) = A$. 
So, $A$ splits, which is impossible.\\
$\bullet$ If $h = 2n \geq 8$, then  $\sigma^{**}(x^h) = (1+x) \sigma(x^n) \sigma(x^{n-1})$.\\
- If $n=2w \geq 4$, then $\sigma(x^n) = RS$ because it divides $\sigma^{**}(A)=A$ and neither $x$ nor $x+1$ divide $\sigma(x^n)$.
So, by Lemma \ref{caspair=6}, $\deg(R) = \deg(S)$ and $R(0) = 1 = S(0)$. From Lemma \ref{lesPQRS}, we may put 
$1+R = x^{u_1}(x+1)^{v_1}$, $1+S =  x^{u_2}(x+1)^{v_2}$, where $u_1,u_2,v_1,v_2 \geq 1$. 
Therefore, $2w = 6$ and $h=12$. But, the monomials $x+1,x+\alpha$ and $x+\alpha+1$ all divide $\sigma^{**}(x^{12})$. It  contradicts the fact that $A$ does not split.\\
- If $n=2w+1$ is odd, then $\sigma(x^{n-1}) = RS$ and as above, $n-1=2w = 6$. So, $h=14$, 
$\sigma^{**}(x^{14}) = (x+1)^8 RS$ where $R,S \in \{x^3+x+1,x^3+x^2+1\}$. 
\end{proof}
\begin{lemma}  \label{casehodd}
If $h$ is odd, then $h = 2^ru-1$ where $r \in \N^*$ and $u \in \{1,7\}.$
\end{lemma}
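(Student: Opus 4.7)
Write $h=2^{r}u-1$ with $u$ odd and $r\in\N^{*}$. In characteristic~$2$, a direct computation yields
$$\sigma(T^{2^{r}u-1})=\frac{(T^{u}+1)^{2^{r}}}{T+1}=(T+1)^{2^{r}-1}\,\sigma(T^{u-1})^{2^{r}},$$
so Lemma \ref{aboutsigmastar2}(i) gives $\sigma^{**}(x^{h})=(x+1)^{2^{r}-1}\sigma(x^{u-1})^{2^{r}}$, which must divide $A=x^{h}(x+1)^{k}R^{l}S^{t}$. Since $\sigma(x^{u-1})(0)=1$ and $\sigma(x^{u-1})(1)=u\equiv 1\pmod{2}$, neither $x$ nor $x+1$ divides $\sigma(x^{u-1})$; because $A$ does not split, every irreducible factor of $\sigma(x^{u-1})$ must therefore lie in $\{R,S\}$.

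If $u=1$ the statement holds trivially, so assume $u\geq 3$. Since $u$ is odd, $x^{u}+1$ is coprime to its derivative $u\,x^{u-1}$, hence is squarefree, and so is $\sigma(x^{u-1})=(x^{u}+1)/(x+1)$. Therefore $\sigma(x^{u-1})\in\{R,\,S,\,RS\}$. I would eliminate the first two possibilities by showing that $\sigma(x^{u-1})$ is never irreducible over $\F_{4}$ for $u\geq 3$ odd: every irreducible factor over $\F_{4}$ has degree $\mathrm{ord}_{d}(4)$ for some divisor $d\geq 3$ of $u$, and since $4=2^{2}$ is a square in $(\Z/d\Z)^{*}$, this order divides $\varphi(d)/2<\varphi(d)$, forcing at least two distinct factors in $\sigma(x^{u-1})$.

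We are thus reduced to $\sigma(x^{u-1})=RS$, with $R,S$ distinct irreducibles and $\deg R+\deg S=u-1$. When $\deg R=\deg S$, the second part of Lemma \ref{lesPQRS} forces $z=0$, so $R$ and $S$ are both of the form $x^{u_{i}}(x+1)^{v_{i}}+1$, and the ``moreover'' part of Lemma \ref{caspair=6} immediately gives $u-1=6$, i.e.\ $u=7$. The main obstacle is the remaining case $\deg R<\deg S$ with possibly $z\geq 1$: here $S=x^{u_{2}}(x+1)^{v_{2}}R^{z}+1$ must also divide the squarefree polynomial $\sigma(x^{u-1})$, while $R\mid\sigma(x^{u-1})$ gives $x^{u_{1}}(x+1)^{v_{1}}\equiv 1\pmod{R}$ and $x^{u}\equiv 1\pmod{R}$. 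The plan is to exploit the orders of $x$ and $x+1$ in $(\F_{4}[x]/R)^{*}$, together with the identity $x^{u_{2}}(x+1)^{v_{2}}R^{z}=S+1$ reduced modulo $R$ (which yields $x^{u_{2}}(x+1)^{v_{2}}\equiv 1\pmod{R}$ as well), to derive a degree/order contradiction with $\deg R+\deg S=u-1$, forcing $\deg R=\deg S$ and hence $u=7$. Combining both possibilities gives $u\in\{1,7\}$, as claimed.
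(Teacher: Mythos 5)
Your reduction is sound and mostly follows the paper's own line: write $h=2^{r}u-1$ with $u$ odd, observe that $\sigma^{**}(x^{h})=(x+1)^{2^{r}-1}\sigma(x^{u-1})^{2^{r}}$ divides $A$, and that neither $x$ nor $x+1$ divides $\sigma(x^{u-1})$, so its prime factors lie in $\{R,S\}$; your squarefreeness remark and the order-of-$4$ argument excluding the possibility that $\sigma(x^{u-1})$ is irreducible are correct, and they actually supply details the paper passes over silently when it simply asserts $\sigma(x^{u-1})=RS$. The genuine gap is at the end: the case $\deg R<\deg S$ (with possibly $z\geq 1$ in $1+S=x^{u_2}(x+1)^{v_2}R^{z}$) is not proved --- you only describe a ``plan'' for a degree/order contradiction, so as written the lemma is not established in that case.

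That case, however, needs no new argument: the first assertion of Lemma \ref{caspair=6}, which you quote but do not invoke at the crucial point, applies directly. Since $u-1=2w$ is even and $\sigma(x^{u-1})=RS$ is a factorization into two (irreducible, hence nonconstant) factors, Lemma \ref{caspair=6} already gives $\deg R=\deg S$ and $R(0)=S(0)=1$; the unequal-degree case is vacuous. Then the ``moreover'' clause of Lemma \ref{lesPQRS} forces $z=0$ and $u_2,v_2\geq 1$, so both $R$ and $S$ are of the form $x^{a}(x+1)^{b}+1$, and the second part of Lemma \ref{caspair=6} yields $u-1=6$, i.e.\ $u=7$, with $R,S\in\{x^{3}+x+1,\,x^{3}+x^{2}+1\}$. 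This is precisely the paper's proof (its ``as we have just seen above'' refers to the even-exponent argument in Lemma \ref{caseheven}). Replacing your sketched order computation by this short application of Lemmas \ref{caspair=6} and \ref{lesPQRS} closes the gap.
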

\begin{proof}
Put $h = 2^ru-1$ with $u$ odd. One has: $$\sigma^{**}(x^h) = \sigma(x^h) = (1+x)^{2^r-1} [\sigma(x^{u-1})]^{2^r}.$$
If $u \geq 3$, then  $\sigma(x^{u-1}) = RS$. So, as we have just seen above, $u-1=6$ and $R, S \in \{x^3+x+1, x^3+x^2+1\}$.
\end{proof}
\begin{lemma} \label{lesl-et-t}
If $l$ is even $($resp. odd$)$, then $l = 2 \ ($resp. $l=2^s-1$, with $s \geq 1)$.
\end{lemma}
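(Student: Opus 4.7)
I would analyze $\sigma^{**}(R^l)$, which divides $A$ and, by Lemma \ref{aboutsigmastar2}(ii), has no factor of $R$; hence all its irreducible divisors in $\F_4[x]$ lie in $\{x, x+1, S\}$. The strategy is, for any $l$ outside the asserted list, to exhibit a cyclotomic factor $\Phi_d(R)$ of $\sigma^{**}(R^l)$ with $d$ odd and $d \geq 3$, and then to show that such a factor cannot be accommodated.

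First I rewrite $\sigma^{**}(R^l)$ using Lemma \ref{aboutsigmastar2}(i). For $l$ odd, putting $l = 2^s u - 1$ with $u$ odd, one gets $\sigma^{**}(R^l) = (R+1)^{2^s - 1}[\sigma(R^{u-1})]^{2^s}$; when $u = 1$ this already gives the allowed form $l = 2^s - 1$. For $l = 2m$ even, one gets $\sigma^{**}(R^l) = (1+R)\sigma(R^m)\sigma(R^{m-1})$; when $m = 1$ this gives $l = 2$. Invoking the cyclotomic factorization of $R^n - 1$ over $\F_4[R]$---which in characteristic $2$ is controlled by the odd part of $n$---I verify that in the excluded ranges ($u \geq 3$ odd, or $m \geq 2$) some odd $d \geq 3$ divides $u$, $m$, or $m+1$, so that $\Phi_d(R)$ divides $\sigma^{**}(R^l)$.

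Next, for any odd $d \geq 3$, the element $4 = 2^2$ is a square in $(\Z/d\Z)^*$, so $\mathrm{ord}_d(4) \leq \phi(d)/2$, and hence $\Phi_d$ factors over $\F_4[y]$ into at least two distinct monic irreducibles $U, V$ of a common degree. I check that none of $x, x+1, R$ divides $U(R)$ or $V(R)$: recalling from Lemma \ref{lesPQRS} that $R(0) = R(1) = 1$, the evaluations at $x = 0$ and $x = 1$ both give $U(1) \neq 0$ (since $1$ is not a root of $\Phi_d$ for $d > 1$), while reduction modulo $R$ yields $U(R) \equiv U(0) \pmod{R}$ with $U(0) \neq 0$. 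Hence $U(R) = S^i$ and $V(R) = S^j$ for positive integers $i, j$.

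A degree comparison closes the argument: $i \deg(S) = \deg(U) \cdot \deg(R) = \deg(V) \cdot \deg(R) = j \deg(S)$ forces $i = j$, whence $U(R) = V(R)$, contradicting $U \neq V$. I expect the main obstacle to be the combinatorial bookkeeping that produces the odd $d \geq 3$ in the right range (together with the standard fact that $\Phi_d$ splits into at least two pieces over $\F_4$); once these are secured, the two parity cases collapse into a single contradiction.
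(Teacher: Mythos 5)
Your proof is correct, and it follows the same basic strategy as the paper --- namely that $\sigma^{**}(R^l)$ divides $A$, is prime to $R$, is prime to $x$ and $x+1$ on the relevant piece (via $R(0)=R(1)=1$ from Lemma \ref{lesPQRS}), so that the offending factor would have to be a power of $S$ --- but you derive the final contradiction by a genuinely different and more self-contained mechanism. The paper splits on the parity of $n$ (for $l=2n$) and argues $\sigma(R^{n})=S^z$ (or $\sigma(R^{n-1})=S$, or $\sigma(R^{u-1})=S$ in the odd case), then dismisses this as impossible, implicitly leaning on the reducibility of $1+x+\cdots+x^{2w}$ over $\F_4$ (Lemma \ref{caspair=6}, imported from \cite{Gall-Rahav3}) and asserting $z=1$ without detail. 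You instead extract a single cyclotomic factor $\Phi_d(R)$ with $d\geq 3$ odd (available whenever $m\geq 2$ or $u\geq 3$, since $m$ and $m+1$ cannot both be powers of $2$), prove directly that $\Phi_d$ has at least two distinct equal-degree irreducible factors over $\F_4$ because $4$ is a square modulo $d$ and $\phi(d)$ is even, and then the forced equalities $U(R)=S^i$, $V(R)=S^j$ collapse by a degree count to $U=V$, a contradiction. This treats the even and odd cases of $l$ uniformly, supplies the proof of the key reducibility fact rather than citing it, and sidesteps the paper's unexplained step ``$z=1$'' entirely (your degree comparison never needs $\sigma(R^{2w})$ to be squarefree). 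The cost is a little more number-theoretic overhead (cyclotomic factorization in characteristic $2$, order of $4$ modulo $d$), which the paper avoids by leaning on its earlier lemma; the benefit is a cleaner, fully justified contradiction.
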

\begin{proof}
$\bullet$ If $l$ is even and $l \geq 4$, then put $l = 2n, \ n \geq 2$. As above, $\sigma(R^n)$ and  $\sigma(R^{n-1})$ divide $A$.\\
- If $n$ is even, then we must have $\sigma(R^n) =S^z$ because $P,Q$ divide $1+R$, $R$ does not divide $\sigma(R^n)$  and $\gcd(1+R,\sigma(R^n)) = 1$. Hence $z = 1$ and $S = \sigma(R^n)$ is irreducible. It is impossible.\\
- If $n$ is odd, then  $\sigma(R^{n-1}) =S$ which is impossible, as above.\\
$\bullet$ If $l = 2^ru-1$ is odd, with $u$ odd, then $\sigma^{**}(R^l) = \sigma(R^l) = (1+R)^{2^r-1} [\sigma(R^{u-1})]^{2^r}.$\\
If $u \geq 3$, then  $\sigma(R^{u-1}) = S$, which is impossible.
\end{proof}

\subsubsection{Case $\deg(P) >1$} \label{omega=4-no-splits}
Several proofs are similar to those in Section \ref{omega=4-one-splits}. 
As above, Lemma \ref{Q=P+1} implies that $Q = P+1$. 
We write: $A = P^h(P+1)^kR^lS^t$. 
\begin{lemma} \label{caspair=6-2}
If $1+P+\cdots+ P^{2w} = RS$, then $\deg(R) = \deg(S)$, $2w=6$, $P \in \Omega_2$ and $R, S \in \{P^3+P+1, P^3+P^2+1\}$.
\end{lemma}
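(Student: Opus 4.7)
The plan is to reduce to Lemma~\ref{caspair=6} by lifting the identity $f(P)=RS$, with $f(y):=1+y+\cdots+y^{2w}$, to a factorization inside $\F_4[y]$. The key input is Lemma~\ref{lesPQRS}: since $Q=P+1$, both $R$ and $S$ already lie in the subring $\F_4[P]\subset\F_4[x]$, namely $R=g(P)$ for $g(y):=y^{u_1}(y+1)^{v_1}+1$ and $S=h(P)$ for $h(y):=y^{u_2}(y+1)^{v_2}g(y)^{z}+1$. Because $P$ is non-constant, evaluation at $P$ gives an injective ring map $\F_4[y]\hookrightarrow\F_4[x]$, so the equality $f(P)=g(P)h(P)$ pulls back to the polynomial identity $f(y)=g(y)h(y)$ in $\F_4[y]$.

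Applying the first part of Lemma~\ref{caspair=6} to this factorization yields $\deg g=\deg h$ and $g(0)=h(0)=1$. Comparing $\deg g=u_1+v_1$ with $\deg h=u_2+v_2+z(u_1+v_1)$ gives $(1-z)(u_1+v_1)=u_2+v_2$, which forces $z\in\{0,1\}$; the case $z=1$ is excluded because it would make $h=g+1$, hence $h(0)=0$, contradicting $h(0)=1$. Therefore $z=0$, $h(y)=y^{u_2}(y+1)^{v_2}+1$, $u_1+v_1=u_2+v_2=w$, and the first conclusion $\deg R=\deg S=w\deg P$ drops out.

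To apply the second part of Lemma~\ref{caspair=6}, I would rule out $u_2=0$ and $v_2=0$ separately: the first forces $h(0)=0$, while the second makes $y+1$ a divisor of $h$, so either $h=y+1$ (giving $S=Q$, impossible) or $h$ is reducible (making $S=h(P)$ reducible, also impossible). Hence both $g$ and $h$ have the form $y^{u}(y+1)^{v}+1$ with $u,v\geq 1$, and Lemma~\ref{caspair=6} delivers $2w=6$ together with $\{g(y),h(y)\}=\{y^3+y+1,\,y^3+y^2+1\}$. Therefore $\{R,S\}=\{P^3+P+1,\,P^3+P^2+1\}$, and the irreducibility of $R$, $S$, $P$, $P+1$ places $P$ in $\Omega_2$. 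The most delicate step is the bookkeeping that excludes the degenerate possibilities $z\geq 1$ and $u_2 v_2=0$; once these are dispatched, Lemma~\ref{caspair=6} carries all the real weight.
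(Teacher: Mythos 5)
Your proof is correct and follows essentially the same route as the paper: reduce to Lemma~\ref{caspair=6} by lifting the factorization of $1+P+\cdots+P^{2w}$ to $\F_4[y]$, use the shapes $1+R=P^{u_1}Q^{v_1}$, $1+S=P^{u_2}Q^{v_2}R^z$ from Lemma~\ref{lesPQRS}, and kill the degenerate cases ($z\geq 1$, $u_2v_2=0$) by degree and constant-term comparisons. Your justification of the lift via injectivity of the evaluation $y\mapsto P$ is a slightly more explicit version of what the paper simply asserts, but the argument is the same in substance.
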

\begin{proof}
Suppose that $1+P+\cdots+ P^{2w} = RS$. One has $1+x+\cdots+x^{2w} = UV$ where $U(P) = R$ and $V(P) = S$. By Lemma \ref{caspair=6},
one has:
$U(0)=1=V(0),\ \deg(U) = \deg(V)$. 
So,  $\deg(R) = \deg(S)$.\\
Moreover, $U$ and $V$ must be of the form $x^u(x+1)^v + 1$. Indeed, if $1+U = x^{u_1}(x+1)^{v_1}L^z$, with $z \geq 0$, then $1+R = P^u(P+1)^vL(P)^z$, 
$L(P) = S^y$, $y \geq 1$, $\deg(S)=\deg(R) = u \deg(P) + zy \deg(S)$, $zy = 0$. Thus, $z = 0$ and  $1+U = x^{u_1}(x+1)^{v_1}$. 
Analogously, $1+V =  x^{u_2}(x+1)^{v_2}$. 
Therefore, by Lemma \ref{caspair=6}, $2w=6$ and $R,S \in \{P^3+P+1, P^3+P^2+1\}$.
\end{proof}
\begin{lemma} \label{caseheven2}
If $h$ is even, then $h \in \{2,14\}.$
\end{lemma}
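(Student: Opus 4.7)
The approach is to adapt the argument of Lemma~\ref{caseheven} to the non-splitting setting, using Lemma~\ref{caspair=6-2} in place of Lemma~\ref{caspair=6}. The role played in the splitting case by the hypothesis that $A$ does not split will be played here by Lemma~\ref{lesPQRS}, which gives $\deg(R),\deg(S) \geq \deg(P) + \deg(P+1) = 2d$, where $d := \deg(P) \geq 2$.

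First I would rule out $h \in \{4,6\}$. By (\ref{relation1})(ii)--(iii), $\sigma^{**}(P^h)$ contains the factor $(P+\alpha)(P+\alpha+1)$, which must therefore divide $A = P^h(P+1)^k R^l S^t$. However, $P+\alpha$ has degree $d$ and is coprime to $P(P+1)$, so its irreducible divisors would have to be $R$ or $S$; these have degree at least $2d > d$, a contradiction.

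Next, assuming $h = 2n \geq 8$, I would use $\sigma^{**}(P^{2n}) = (1+P)\sigma(P^n)\sigma(P^{n-1})$. Exactly one of $n, n-1$ equals some $2w \geq 4$, and $\sigma(P^{2w}) = 1 + P + \cdots + P^{2w}$ is squarefree in $\F_4[x]$ (since $2w+1$ is odd in characteristic~$2$) and coprime to $P(P+1)$. Its irreducible factors therefore lie in $\{R,S\}$, each with multiplicity at most one, so $\sigma(P^{2w}) \in \{R,\, S,\, RS\}$. Mirroring Lemma~\ref{caseheven}, the single-irreducible cases are ruled out because $\sigma(y^{2w})$ factors non-trivially in $\F_4[y]$ for every $w \geq 1$, so that $\sigma(P^{2w}) = \sigma(y^{2w})|_{y=P}$ is reducible in $\F_4[x]$. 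Consequently $\sigma(P^{2w}) = RS$, and Lemma~\ref{caspair=6-2} then forces $2w = 6$, $P \in \Omega_2$, and $R, S \in \{P^3+P+1,\, P^3+P^2+1\}$; this leaves either $n = 6$ (so $h = 12$) or $n-1 = 6$ (so $h = 14$).

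Finally I would rule out $h = 12$ and confirm $h = 14$. In the first case $\sigma^{**}(P^{12})$ contains the factor $\sigma(P^5) = (P+1)(P+\alpha)^2(P+\alpha+1)^2$, so $P+\alpha$ would have to divide $A$---impossible by the degree argument of the first step. In the second case $\sigma(P^7) = (P+1)^7$, so $\sigma^{**}(P^{14}) = (P+1)^8 RS$ only involves the allowed factors $P+1, R, S$ and is consistent. Together with the trivial case $h = 2$ (where $\sigma^{**}(P^2) = (P+1)^2$), this yields $h \in \{2,14\}$. The hard part will be justifying $\sigma(P^{2w}) = RS$ rather than a single irreducible: one must exclude $\sigma(P^{2w})$ being irreducible of degree $2wd$, which rests on the reducibility of $\sigma(y^{2w})$ over $\F_4[y]$ for $w \geq 1$---a small number-theoretic fact (ultimately traceable to the observation that $4$ is never a primitive root modulo an odd prime) that is not explicit in the excerpt.
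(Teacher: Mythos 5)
Your overall route is essentially the paper's: rule out $h\in\{4,6\}$, write $\sigma^{**}(P^{2n})=(1+P)\sigma(P^n)\sigma(P^{n-1})$ for $h=2n\ge 8$, reduce to $\sigma(P^{2w})=RS$, apply Lemma~\ref{caspair=6-2} to force $2w=6$, and then eliminate $h=12$ in favour of $h=14$. The genuine differences are points in your favour: where the paper kills $h\in\{4,6\}$ and $h=12$ by invoking the impossibility of $P$, $P+1$, $P+\alpha$, $P+\alpha+1$ all being irreducible (a fact imported from earlier work), you use Lemma~\ref{lesPQRS} to get $\deg R,\deg S\ge 2\deg P>\deg(P+\alpha)$, which is self-contained and works verbatim under the standing hypotheses of this subsection. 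Likewise, your explicit exclusion of the case where $\sigma(P^{2w})$ is a single irreducible, via the reducibility of $1+y+\cdots+y^{2w}$ over $\F_4$ (equivalently, $4$ is never a primitive root modulo an odd prime), is correct and addresses a step the paper leaves implicit in its appeal to Lemma~\ref{caspair=6}.

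There is, however, one justification that fails as stated: the claim that $\sigma(P^{2w})=1+P+\cdots+P^{2w}$ is squarefree in $\F_4[x]$ ``since $2w+1$ is odd''. Oddness of $2w+1$ gives squarefreeness of $F(y)=1+y+\cdots+y^{2w}$ in $\F_4[y]$, but squarefreeness is not preserved by the substitution $y=P(x)$ when $\deg P\ge 2$. For instance, $P=x^3+x^2+x+\alpha+1$ is irreducible over $\F_4$ with $P'=(x+1)^2$ and $P(1)=\alpha$, so $1+P+P^2=(P+\alpha)(P+\alpha+1)=(x+1)^3(P+\alpha+1)$ is not squarefree, and consequently neither is $\sigma(P^8)$, which it divides. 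What you actually need is weaker and does hold: neither $R^2$ nor $S^2$ divides $\sigma(P^{2w})$. Indeed, since $\gcd(F,F')=1$ in $\F_4[y]$, also $\gcd(F(P),F'(P))=1$ in $\F_4[x]$, so a repeated irreducible factor of $F(P)=\sigma(P^{2w})$ must divide $(F(P))'=F'(P)P'$, hence must divide $P'$; but $\deg P'<\deg P<2\deg P\le\deg R\le\deg S$ by Lemma~\ref{lesPQRS} (and $P'\neq 0$ because $P$ is irreducible, hence not a square). With this patch, which uses only the degree bound you already introduced, your argument is complete and matches the paper's conclusion.
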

\begin{proof}
$\bullet$ If  $h \in \{4,6\}$, then $P+\alpha$ and $P+\alpha+1$ both divide $\sigma^{**}(P^h)$ and thus, they divide $\sigma^{**}(A) = A$. 
So, $P$, $P+1$, $R=P+\alpha$ and $S=P+\alpha+1$  are all irreducible over $\F_4$, which is impossible.\\
$\bullet$ If $h = 2n \geq 8$, then  $\sigma^{**}(P^h) = (1+P) \sigma(P^n) \sigma(P^{n-1})$.\\
- If $n=2w \geq 4$ is even, then $\sigma(P^n) = RS$,  $\deg(R) = \deg(S)$. We obtain $2w = 6$ and $h=12$.\\
But $P+1, P+\alpha$ and $P+\alpha+1$ all divide $\sigma^{**}(P^{12})$. As above, it is impossible.\\
- If $n=2w+1$ is odd, then $\sigma(P^{n-1}) = RS$ and $n-1=2w = 6$. So, $h=14$ and $R,S \in \{P^3+P+1, P^3+P^2+1\}$.
\end{proof}
\begin{lemma}  \label{casehodd2}
If $h$ is odd, then $h = 2^ru-1$ where $r \in \N^*$ and $u \in \{1,7\}.$
\end{lemma}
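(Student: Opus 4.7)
The plan is to run the argument of Lemma \ref{casehodd} verbatim with $P$ in place of $x$ and Lemma \ref{caspair=6-2} in place of Lemma \ref{caspair=6}. Write $h = 2^r u - 1$ with $u$ odd; since $h$ is odd, $r \in \N^*$. In characteristic two, $P^{2^r u} + 1 = (P^u + 1)^{2^r} = (P+1)^{2^r}\,\sigma(P^{u-1})^{2^r}$, so
$$\sigma^{**}(P^h) = \sigma(P^h) = (P+1)^{2^r - 1}\,\sigma(P^{u-1})^{2^r}.$$
The case $u = 1$ already gives $h = 2^r - 1$, so from now on I assume $u \geq 3$.

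For $u \geq 3$, the polynomial $\sigma(P^{u-1}) = 1 + P + \cdots + P^{u-1}$ has constant term $1$ and takes the value $u$ (odd) at $P = 1$, so it is coprime to $PQ = P(P+1)$. Consequently $\sigma(P^{u-1})^{2^r}$ must divide $R^l S^t$. I claim further that $\sigma(P^{u-1}) = RS$; once this is known, Lemma \ref{caspair=6-2} applied with $2w = u - 1$ forces $u - 1 = 6$, i.e., $u = 7$, which completes the proof.

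The main obstacle is the equality $\sigma(P^{u-1}) = RS$, as opposed to $R^a S^b$ with $\{a,b\}\neq\{1,1\}$. Two ingredients will be combined. First, $f(x) := 1 + x + \cdots + x^{u-1}$ is separable over $\F_2$ for $u$ odd: its derivative equals $(1 + x + \cdots + x^{(u-3)/2})^2$, whose only candidate common root with $f$ would be $x = 1$, excluded because $f(1) = u$ is odd. Since distinct irreducible factors $g_i$ of $f$ in $\F_4[x]$ have pairwise coprime images $g_i(P)$ in the main-variable ring (by the injectivity of $x \mapsto P$ on roots), this separability transfers and rules out multiplicities $\geq 2$ for $R$ or $S$. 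Second, the singleton case $\sigma(P^{u-1}) = R$ would, by matching constant terms and degrees of the identity $1 + P + \cdots + P^{u-1} = 1 + P^{u_1}(P+1)^{v_1}$ (coming from Lemma \ref{lesPQRS}), force $u_1 = 1$, $v_1 = u - 2$ and (via Lucas) $u = 2^k + 1$ with $R = 1 + P(P+1)^{u-2}$ for some $k \geq 1$; but $f(x)$ is already reducible in $\F_4[x]$ for every such $u$ (either $\Phi_{2^k+1}$ splits over $\F_4$, or $2^k + 1$ is composite so $f$ is a product of several cyclotomic factors), so $f(P) = \sigma(P^{u-1})$ cannot be irreducible in $\F_4[X]$, contradicting irreducibility of $R$.
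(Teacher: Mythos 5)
Your argument follows the paper's own route for Lemma \ref{casehodd2}: the same identity $\sigma^{**}(P^h)=\sigma(P^h)=(1+P)^{2^r-1}\,[\sigma(P^{u-1})]^{2^r}$, the same reduction of the case $u\ge 3$ to the equality $\sigma(P^{u-1})=RS$, and the same appeal to Lemma \ref{caspair=6-2} to force $u-1=6$. The difference is that the paper (here and in Lemma \ref{caseheven2}) simply asserts $\sigma(P^{u-1})=RS$, while you try to prove it; your supplementary justification is where the only real issues lie, and your exclusion of the case $\sigma(P^{u-1})=R$ (constant-term/degree matching with $1+R=P^{u_1}Q^{v_1}$, Lucas giving $u=2^k+1$, and reducibility of $1+x+\cdots+x^{u-1}$ over $\F_4$ for such $u$) is correct and is genuinely more than the paper writes down.

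Two gaps in that supplementary part. First, pairwise coprimality of the $g_i(P)$ does not by itself exclude $R^2\mid g_i(P)$ for a single $i$; you need the extra observation that a repeated irreducible factor of $g_i(P)$ divides $\gcd\bigl(g_i(P),g_i'(P)P'\bigr)$, hence divides $P'$ (because $\gcd(g_i,g_i')=1$), which is impossible since $P'\neq 0$ and $\deg P'<\deg P\le\deg R\le\deg S$. Second, and more substantively, you treat $\sigma(P^{u-1})=R$ as ``the'' singleton case, but $\sigma(P^{u-1})=S$ is not symmetric: Lemma \ref{lesPQRS} only gives $1+S=P^{u_2}Q^{v_2}R^z$ with possibly $z\ge 1$, so your matching argument does not apply verbatim and this case is left open as written. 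It can be repaired along the same lines: $P\,\sigma(P^{u-2})=P^{u_2}Q^{v_2}R^z$ forces $u_2=1$; writing $u-1=2^a m$ with $m$ odd, one gets $\sigma(P^{u-2})=(P+1)^{2^a-1}[\sigma(P^{m-1})]^{2^a}$, so either $m=1$ (then $z=0$, $u=2^a+1$ and your cyclotomic-reducibility contradiction applies to $S$) or $\sigma(P^{m-1})=R$ (using the squarefreeness just fixed), to which your singleton-$R$ argument applies with $m$ in place of $u$. With these two repairs your proof is sound and fills in a step the paper leaves implicit.
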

\begin{proof}
Put $h = 2^ru-1$ with $u$ odd. One has: $$\sigma^{**}(P^h) = \sigma(P^h) = (1+P)^{2^r-1} [\sigma(P^{u-1})]^{2^r}.$$
If $u \geq 3$, then  $\sigma(P^{u-1}) = RS$ and as we have just seen above, $u-1=6$ and $R, S \in \{P^3+P+1, P^3+P^2+1\}$.
\end{proof}
We also  get the analoguos of Lemma \ref{lesl-et-t}.
\begin{lemma} \label{lesl-et-t-2}
If $l$ is even $($resp. odd$)$, then $l = 2 \ ($resp. $l=2^s-1$, with $s \geq 1)$.
\end{lemma}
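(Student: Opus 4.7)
The plan is to reproduce the argument of Lemma \ref{lesl-et-t} almost verbatim. Nothing in that proof actually relied on $\deg(P) = 1$; its logical structure was driven entirely by Lemmas \ref{aboutsigmastar2} and \ref{lesPQRS}, both of which continue to hold when $\deg(P) > 1$. The overall strategy is, in each parity case for $l$, to extract from $\sigma^{**}(R^l)$ a factor of the shape $\sigma(R^m)$ with $m \geq 2$ even, to show this factor is coprime to $PQR$, and then to derive a contradiction with the irreducibility of $S$.

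For the even case I write $l = 2n$ with $n \geq 2$ and use $\sigma^{**}(R^l) = (1+R)\sigma(R^n)\sigma(R^{n-1})$. From Lemma \ref{lesPQRS}, $1+R = P^{u_1}Q^{v_1}$ with $u_1, v_1 \geq 1$, hence $R \equiv 1 \pmod{PQ}$, and so in characteristic $2$ one has $\sigma(R^m) \equiv m+1 \pmod{PQ}$. This shows $PQ$ is coprime to $\sigma(R^n)$ if $n$ is even and to $\sigma(R^{n-1})$ if $n$ is odd; since moreover $R \nmid \sigma(R^m)$ for every $m$, the relevant $\sigma(R^m)$ with $m \geq 2$ even must divide $S^t$, so it equals $S^z$ for some $z \geq 1$. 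Because $m+1$ is odd, $R^{m+1}+1$ is separable, forcing $z = 1$ and $S = \sigma(R^m)$ irreducible; but for $m \geq 2$ this already fails, since e.g. $\sigma(R^2) = (R+\alpha)(R+\alpha+1)$ splits into two coprime factors.

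For the odd case I write $l = 2^r u - 1$ with $u$ odd, so that $\sigma^{**}(R^l) = (1+R)^{2^r - 1}[\sigma(R^{u-1})]^{2^r}$. If $u \geq 3$ then $u - 1 \geq 2$ is even and $\sigma(R^{u-1})$ divides $A$; applying the even-case analysis with $m = u - 1$ produces $\sigma(R^{u-1}) = S^z$ and hence the same contradiction. So $u = 1$ and $l = 2^s - 1$ with $s = r \geq 1$. The only delicate point, shared with Lemma \ref{lesl-et-t}, is showing that $\sigma(R^m) = S^z$ is impossible; but this step rests only on $R \equiv 1 \pmod{PQ}$ and on the structure of $\sigma(R^m)$, both available here, so no new obstacle arises from the hypothesis $\deg(P) > 1$.
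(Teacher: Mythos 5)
Your overall plan is the one the paper intends: the paper proves Lemma \ref{lesl-et-t-2} simply by declaring it the analogue of Lemma \ref{lesl-et-t}, and your reduction of both parity cases to the impossibility of $\sigma(R^m)=S^z$ for some even $m\ge 2$ (using $1+R=P^{u_1}Q^{v_1}$ from Lemma \ref{lesPQRS}, hence $\sigma(R^m)\equiv m+1\equiv 1 \bmod PQ$, together with $R\nmid\sigma(R^m)$) is exactly that argument, and you are right that nothing there uses $\deg(P)=1$. However, two of your supporting claims are defective. First, ``$m+1$ odd implies $R^{m+1}+1$ separable'' is false: separability of $x^{m+1}+1$ does not survive the substitution $x\mapsto R$. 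For instance, if $R=P^3+P+1$ then $1+R=P(P+1)^2$ and $R'=(P+1)^2P'$, so $(P+1)^2$ divides $\gcd\bigl(R^{m+1}+1,(R^{m+1}+1)'\bigr)$. If you want $z=1$, argue instead that $G:=1+x+\cdots+x^m$ is squarefree (because $x^{m+1}+1$ is, $m+1$ being odd, and $G(1)=1$), so $\gcd(G,G')=1$; then $S^2\mid G(R)$ would force $S\mid R'$, impossible since $\deg R'<\deg R\le\deg S$.

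Second, and more seriously, your closing contradiction is only verified for $m=2$. The lemma requires ruling out every even $l\ge 4$ and every odd $l=2^ru-1$ with $u\ge 3$, i.e. you must know for every even $m\ge 2$ that $\sigma(R^m)$ is not a power of the single irreducible $S$; the example $\sigma(R^2)=(R+\alpha)(R+\alpha+1)$ says nothing about $m\ge 4$. The missing (true) fact is that $1+x+\cdots+x^m$ is, for every even $m\ge 2$, a product of two coprime nonconstant polynomials over $\F_4$: each cyclotomic factor $\Phi_d$, $d\ge 3$, of $x^{m+1}+1$ is reducible over $\F_4$ because $4$ is a square modulo the odd integer $d$, so its multiplicative order modulo $d$ is at most $\phi(d)/2$; combined with squarefreeness this yields coprime nonconstant $U,V$ with $\sigma(x^m)=UV$, hence $\sigma(R^m)=U(R)V(R)$ has two coprime nonconstant factors and cannot equal $S^z$ for any $z\ge 1$ (which also makes the $z=1$ discussion unnecessary). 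The paper itself leaves this step implicit (the bare ``It is impossible'' in the proof of Lemma \ref{lesl-et-t}), but since you are writing the proof out, this general claim must be stated and justified, not illustrated by the case $m=2$ alone.
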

\subsubsection{End of the proof}
We recapitulate below, for $P \in \Omega_2$, 
$Q = P+1$,  $R = P^3+P+1$ and $S = P^3+P^2+1$,  the expressions of $\sigma^{**}(T^z)$, for $T^z \in \{P^h,Q^k,R^l,S^t\}$.

Keep in mind that $h,k,l$ and $t$ are not all odd. 
\begin{equation} \label{expressionsigstar21}
\begin{array}{|l|c|}
\hline
h&\sigma^{**}(P^h)\\
\hline
2&Q^2\\
\hline
14&Q^8RS\\
\hline
2^r - 1&Q^{2^r-1}\\
\hline
7\cdot 2^r - 1&Q^{2^r-1}R^{2^r}S^{2^r}\\
\hline
\end{array} \ \begin{array}{|l|c|}
\hline
k&\sigma^{**}(Q^k)\\
\hline
2&P^2\\
\hline
14&P^8RS\\
\hline
2^s - 1&P^{2^s-1}\\
\hline
7\cdot 2^s - 1&P^{2^s-1}R^{2^s}S^{2^s}\\
\hline
\end{array}
\end{equation}~\\
\begin{equation} \label{expressionsigstar22}
\begin{array}{|l|c|}
\hline
l&\sigma^{**}(R^l)\\
\hline
2& P^2Q^4\\
\hline
2^e - 1& P^{2^e-1} \cdot Q^{2\cdot(2^e-1)}\\
\hline
\end{array} \ \begin{array}{|l|c|}
\hline
t&\sigma^{**}(S^t)\\
\hline
2&P^4Q^2\\
\hline
2^f - 1&P^{2\cdot(2^f-1)} \cdot Q^{2^f-1}\\
\hline
\end{array}
\end{equation}~\\
We compare from Tables (\ref{expressionsigstar21}) and (\ref{expressionsigstar22}), the exponents of $P,Q,R,S$ in $\sigma^{**}(A)$ and in $A$. 
Instead of considering several possible cases, we give an upper bound to each exponent $a \in \{h,k,l,t\}$. We use {\tt{Maple}} computations to determine those which satisfy $\sigma^{**}(A) = A$. We obtain the following results.

\begin{lemma} \label{lesexposants2}
- If $h$ and $k$ are both even, then $h,k \in \{2,14\}$ and $e, f \leq 3$. So, 
$l,t \in \{1,2,3,7\}$.\\
- If $h$ is even and $k$ odd, then $h \in \{2,14\}$ and $s,e,f \leq 3$. So,\\
$k \in \{1,3,7,13,27,55\}$ and $l,t \in \{1,2,3,7\}$.\\
- If $h$ and $k$ are both odd, then $(h,k,l,t) \in \{(3,7,2,2),(7,3,2,2)\}$. 
\end{lemma}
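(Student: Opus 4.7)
The approach will be to compare, one irreducible factor at a time, the exponent appearing on each side of $\sigma^{**}(A) = A$, using the expressions of $\sigma^{**}(T^z)$ tabulated in (\ref{expressionsigstar21}) and (\ref{expressionsigstar22}). A structural observation drawn from these tables drives the whole argument: the $R$- and $S$-exponents of $\sigma^{**}(A)$ receive contributions only from $\sigma^{**}(P^h)$ and $\sigma^{**}(Q^k)$, since neither $\sigma^{**}(R^l)$ nor $\sigma^{**}(S^t)$ involves $R$ or $S$; the $P$-exponent (resp.\ $Q$-exponent) on the other hand collects contributions from $\sigma^{**}(Q^k)$ (resp.\ $\sigma^{**}(P^h)$) and from both $\sigma^{**}(R^l)$ and $\sigma^{**}(S^t)$.

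For the first two bullets, Lemma \ref{caseheven2} already gives $h \in \{2, 14\}$. When $l = 2^e - 1$ is odd, Table (\ref{expressionsigstar22}) shows that $\sigma^{**}(R^l)$ contributes exactly $2^e - 1$ to the $P$-exponent; comparing with the equation ``$P$-exponent $= h \leq 14$'' yields $2^e - 1 \leq 14$, hence $e \leq 3$. The same argument applied to $\sigma^{**}(S^t)$ gives $f \leq 3$, and hence $l, t \in \{1, 2, 3, 7\}$. In the mixed-parity bullet, Lemma \ref{casehodd2} shows that $\sigma^{**}(Q^k)$ contributes $2^s - 1$ to the $P$-exponent whether $k = 2^s - 1$ or $k = 7 \cdot 2^s - 1$; the same inequality then forces $s \leq 3$, placing $k$ in the claimed six-element set.

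For the third bullet, I would exploit a symmetry visible in Table (\ref{expressionsigstar21}): each admissible form of $\sigma^{**}(P^h)$ (resp.\ $\sigma^{**}(Q^k)$) assigns the same exponent to $R$ and to $S$. Hence the $R$- and $S$-exponents of $\sigma^{**}(A)$ coincide, forcing $l = t$. Since $h, k$ are odd but not all four exponents are, at least one of $l, t$ must be even, so both are, and Lemma \ref{lesl-et-t-2} then gives $l = t = 2$. The equation ``$R$-exponent $= 2$'' combined with the admissible shapes from Lemma \ref{casehodd2} reduces to a small arithmetic puzzle in $r$ and $s$; the $P$-exponent equation then pins down the remaining parameter, leaving only the tuples listed in the statement. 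The main obstacle throughout is bookkeeping across the many sub-cases without overlooking or double-counting a shape of $h, k, l, t$; a short Maple pass over the handful of admissible tuples is a sensible final check.
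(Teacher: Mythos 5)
Your method is essentially the paper's: read off from Tables (\ref{expressionsigstar21}) and (\ref{expressionsigstar22}) which factor can contribute to which exponent of $\sigma^{**}(A)=A$, use $h\le 14$ (Lemma \ref{caseheven2}) on the $P$-exponent to force $s,e,f\le 3$ in the first two bullets, and settle the both-odd case by comparing exponents over the possible shapes of $(h,k)$. Your entry into the third bullet is a small, pleasant variation: noting that every admissible $\sigma^{**}(P^h)$ and $\sigma^{**}(Q^k)$ gives $R$ and $S$ the same exponent, so $l=t$, hence (not all four exponents being odd) $l=t=2$ by Lemma \ref{lesl-et-t-2}. The paper instead runs through the shape combinations of $(h,k)$ directly and disposes of the case $h=2^r-1,\ k=2^s-1$ via indecomposability (there $P^hQ^k$ and $R^lS^t$ would each be b.u.p.); in your setup that case dies at once because the $R$-exponent of $\sigma^{**}(A)$ would be $0\ne l$.

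The one real problem is your final assertion. If you actually solve the ``small arithmetic puzzle'': the $R$-exponent equation forces $2^s=2$ (or symmetrically $2^r=2$), i.e.\ $k=13$ (or $h=13$), and the $Q$- (resp.\ $P$-) exponent equation then reads $2^r-1+4+2=13$, giving $r=3$, $h=7$ (resp.\ $k=7$); when both $h$ and $k$ have the form $7\cdot 2^{r}-1$, $7\cdot 2^{s}-1$ one gets the contradiction $2^r+2^s=2$. So the surviving tuples are $(h,k,l,t)=(7,13,2,2)$ and $(13,7,2,2)$, exactly what the paper's own proof derives and what Proposition \ref{lesexposants23} and Theorem \ref{casebupnon-split}~iv) record, and \emph{not} the pairs $(3,7,2,2)$, $(7,3,2,2)$ printed in the statement of Lemma \ref{lesexposants2}, which appear to be a misprint: indeed $\sigma^{**}(P^3Q^7R^2S^2)=Q^3\cdot P^7\cdot P^2Q^4\cdot P^4Q^2=P^{13}Q^{9}\ne P^3Q^7R^2S^2$. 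By deferring to ``the tuples listed in the statement'' instead of carrying out the computation, you endorse a conclusion that your own argument refutes; the proof should end with the explicit derivation of $(7,13,2,2)$ and $(13,7,2,2)$ and the contradiction in the remaining shape case.
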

\begin{proof}
- If $h$ is even, then $h \leq 14$. Each exponent of $P$ in the tables equals at most $14$. So, $s,e,f \leq 3$.\\
- If $h=2^r-1$ and $k=2^s-1$, then $\sigma^{**}(P^hQ^k) = P^kQ^h$, $h=k$ and $P^hQ^k$ is b.u.p. Hence, $R^lS^t$ is also b.u.p. and $l=t=2$.\\
- If $h=2^r-1$ and $k=7 \cdot 2^s-1$, then only $R^{2^s}$ and $S^{2^s}$  divide  $\sigma^{**}(A) = A$. So, $s=1$, $l=t=2$, $k = 13$. Thus, 
$\sigma^{**}(R^lS^t) = P^6Q^6$. By comparing the exponents of $Q$ in the tables, we get $6+2^r-1 = k=13$. So, $r= 3$ and $h=7$.\\
Analogously, if  $h=7 \cdot 2^r-1$ and $k=2^s-1$, then $h=13, k=7, l=t=2$.\\
- If $h=7 \cdot 2^r-1$ and $k=2^s-1$, then  only $R^{2^r+2^s}$ and $S^{2^r+2^s}$  divide  $\sigma^{**}(A) = A$. So, $l=t=2$ and we get the contradiction:
$2^r+2^s =2$ with $r,s \geq 1$. 
\end{proof} 
We also remark that the values of the exponents $h,k,l$ and $t$ do not depend on the choice of $P \in \Omega_2$. 
Therefore, for the computations with {\tt{Maple}}, we took two values of $P$:  $P = x$ and $P=x^{2} + x + \alpha$.
\begin{proposition} \label{lesexposants23}
If $A=P^hQ^kR^lS^t$ is b.u.p and indecomposable, where $h,k,l$ and $t$  are not all odd, then
$$\mbox{$P \in \Omega_2$, $Q = P+1$. $R, S \in \{P^3+P+1, P^3+P^2+1\}$,}$$ and $(h,k,l,t) \in \{(7,13,2,2), (13,7,2,2), (14,14,2,2)\}$.
\end{proposition}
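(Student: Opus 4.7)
The plan is to combine the structural lemmas established in Sections \ref{omega=4-one-splits} and \ref{omega=4-no-splits} with a short finite enumeration driven by Tables (\ref{expressionsigstar21}) and (\ref{expressionsigstar22}).

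First I would invoke Lemma \ref{lesPQRS}, which already gives $Q = P+1$ and imposes the skeleton $1 + R = P^{u_1} Q^{v_1}$, $1 + S = P^{u_2} Q^{v_2} R^{z}$. By Lemma \ref{translation} the pairs $(P,h)$ and $(Q,k)$ are symmetric, so I may assume without loss of generality that $h$ is even. (Some exponent among $h, k$ must be even, for otherwise Lemma \ref{lesl-et-t-2} forces $l, t \in \{2\} \cup \{2^s-1\}$ and one checks that $P^h Q^k$ and $R^l S^t$ would then individually be b.u.p., contradicting indecomposability.) Lemma \ref{caseheven2}, or its $\deg(P) = 1$ counterpart \ref{caseheven}, then pins $h$ to $\{2, 14\}$.

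Next I would separate an \emph{exceptional} regime for $h$ (namely $h = 14$, or $h = 7 \cdot 2^r - 1$ on the odd side handled by Lemma \ref{casehodd2}) from a \emph{non-exceptional} one ($h \in \{2\} \cup \{2^r - 1\}$). Both exceptional values produce a factor $\sigma(P^{6})$ inside $\sigma^{**}(P^{h})$; being coprime to $P(P+1)$, this factor must equal $RS$, and Lemma \ref{caspair=6-2} then forces $P \in \Omega_2$ together with $\{R, S\} = \{P^3 + P + 1,\ P^3 + P^2 + 1\}$. If instead both $h$ and $k$ were non-exceptional, no power of $R$ or $S$ would occur in $\sigma^{**}(P^h Q^k)$, so $R^l S^t$ would itself be b.u.p., again contradicting indecomposability. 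This establishes the structural part of the conclusion.

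With the quadruple $(P, Q, R, S)$ identified, Lemma \ref{lesl-et-t-2} restricts $l$ and $t$ to $\{2\} \cup \{2^s - 1 : s \geq 1\}$, and Tables (\ref{expressionsigstar21})--(\ref{expressionsigstar22}) express $\sigma^{**}(A)$ as a pure monomial in $P, Q, R, S$. The equation $\sigma^{**}(A) = A$ becomes a linear system in the parameters $r, s, e, f$ attached to $h, k, l, t$; the bound $h \leq 14$ on the $A$-side caps every $P$-exponent contributed by $Q^k, R^l, S^t$ by $14$, forcing $s, e, f \leq 3$. This reduces the problem to the finitely many candidate tuples listed in Lemma \ref{lesexposants2}, and the final verification that only $(7, 13, 2, 2)$, $(13, 7, 2, 2)$ and $(14, 14, 2, 2)$ satisfy the system can be carried out with \texttt{Maple} on two concrete instances ($P = x$ and $P = x^2 + x + \alpha$), since the exponent equations are independent of the choice of $P \in \Omega_2$. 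The main obstacle I anticipate is the exceptional/non-exceptional dichotomy in the second paragraph: indecomposability must be invoked at precisely the right spot to exclude reducible solutions such as $P^2 Q^2 \cdot R^2 S^2$, which otherwise would leak into the enumeration.
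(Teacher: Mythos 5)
Your overall architecture (structural lemmas for $Q=P+1$, the exceptional/non-exceptional dichotomy that forces $P\in\Omega_2$ and $\{R,S\}=\{P^3+P+1,P^3+P^2+1\}$ via Lemma \ref{caspair=6-2}, then a bounded enumeration against Tables (\ref{expressionsigstar21})--(\ref{expressionsigstar22}) checked by \texttt{Maple}) is essentially the paper's. But there is a genuine logical error at the start: the reduction ``without loss of generality $h$ is even'' is false. Your parenthetical justification — that if $h$ and $k$ are both odd then $P^hQ^k$ and $R^lS^t$ would each be b.u.p., contradicting indecomposability — only applies when both odd exponents are of the non-exceptional form $2^r-1$, so that $\sigma^{**}(P^h)=Q^{2^r-1}$ and $\sigma^{**}(Q^k)=P^{2^s-1}$. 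By Lemma \ref{casehodd2} an odd exponent may also be $7\cdot 2^s-1$, in which case $\sigma^{**}(Q^k)=P^{2^s-1}R^{2^s}S^{2^s}$ involves $R$ and $S$, $P^hQ^k$ is not b.u.p.\ on its own, and no decomposition occurs. This is exactly where two of the three solutions live: the paper's Lemma \ref{lesexposants2} (third case) treats $h=2^r-1$, $k=7\cdot 2^s-1$ by hand and obtains $(h,k,l,t)=(7,13,2,2)$ and $(13,7,2,2)$, both with $h$ and $k$ odd. Under your reduction $h\in\{2,14\}$, these tuples can never appear, yet they occur in your own final list — the proposal is internally inconsistent, and as written the enumeration would miss them (or, read strictly, would ``prove'' they do not exist).

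Two smaller points. First, the appeal to Lemma \ref{translation} for the symmetry between $(P,h)$ and $(Q,k)$ is not justified when $\deg P>1$: a translation $x\mapsto x+\lambda$ does not exchange $P$ and $P+1$; the symmetry you want is just the formal symmetry of swapping the roles of $P$ and $Q=P+1$ in all the equations (which is how the paper uses it), and in any case symmetry only lets you exchange $h$ and $k$, not exclude the both-odd case. Second, your claim that if both $h$ and $k$ are non-exceptional then ``no power of $R$ or $S$ would occur in $\sigma^{**}(P^hQ^k)$, so $R^lS^t$ would itself be b.u.p.'' is the right idea for eliminating that subcase, but it needs the prior identification of $R,S$ (so that $\sigma^{**}(R^l)\sigma^{**}(S^t)$ involves only $P,Q$), i.e.\ it belongs after the structural step, as in the paper's proof of Lemma \ref{lesexposants2}. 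The fix is to drop the parity WLOG, keep the exceptional/non-exceptional case split for the pair $(h,k)$ in all four parity combinations, and treat the mixed odd case $\{2^r-1,\,7\cdot 2^s-1\}$ explicitly as the paper does; the rest of your argument then goes through.
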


\subsubsection{Maple Computations} \label{compute}
We search 
all $A=P^hQ^k R^lS^t$ such that $h,k,l,t$ are not all odd, $\omega(A) \geq 3$ and
$\sigma^{**}(A)= A$, by means of Lemmas \ref{casehodd}, \ref{lesl-et-t}  and \ref{lesexposants2}. We get the results stated in Proposition  \ref{lesexposants23}.\\
$\bullet$ {\bf{$\alpha \in \F_4$ is defined as follows:}}
\begin{verbatim}
> alias(alpha = RootOf(x^2 + x + 1)):
\end{verbatim}
$\bullet$ {\bf{The function $\sigma^{**}$ is defined as}} Sigm2star
\begin{verbatim}
> Sigm2star1:=proc(S,a) if a=0 then 1;else if a mod 2 = 0
then n:=a/2:sig1:=sum(S^l,l=0..n):sig2:=sum(S^l,l=0..n-1):
Factor((1+S)*sig1*sig2,alpha) mod 2:
else Factor(sum(S^l,l=0..a),alpha) mod 2:fi:fi:end:
> Sigm2star:=proc(S) P:=1:L:=Factors(S,alpha) mod 2:k:=nops(L[2]):
for j to k do S1:=L[2][j][1]:h1:=L[2][j][2]:
P:=P*Sigm2star1(S1,h1):od:Expand(P) mod 2:end:
\end{verbatim}

\def\thebibliography#1{\section*{\titrebibliographie}
\addcontentsline{toc}
{section}{\titrebibliographie}\list{[\arabic{enumi}]}{\settowidth
 \labelwidth{[
#1]}\leftmargin\labelwidth \advance\leftmargin\labelsep
\usecounter{enumi}}
\def\newblock{\hskip .11em plus .33em minus -.07em} \sloppy
\sfcode`\.=1000\relax}
\let\endthebibliography=\endlist

\def\biblio{\def\titrebibliographie{References}\thebibliography}
\let\endbiblio=\endthebibliography

\newbox\auteurbox
\newbox\titrebox
\newbox\titrelbox
\newbox\editeurbox
\newbox\anneebox
\newbox\anneelbox
\newbox\journalbox
\newbox\volumebox
\newbox\pagesbox
\newbox\diversbox
\newbox\collectionbox
%--------------------------------------------
\def\fabriquebox#1#2{\par\egroup
\setbox#1=\vbox\bgroup \leftskip=0pt \hsize=\maxdimen \noindent#2}
%--------------------------------------------
\def\bibref#1{\bibitem{#1}

%\mbox{}\ignorespaces

\setbox0=\vbox\bgroup}
%--------------------------------------------
\def\auteur{\fabriquebox\auteurbox\styleauteur}
\def\titre{\fabriquebox\titrebox\styletitre}
\def\titrelivre{\fabriquebox\titrelbox\styletitrelivre}
\def\editeur{\fabriquebox\editeurbox\styleediteur}

\def\journal{\fabriquebox\journalbox\stylejournal}

\def\volume{\fabriquebox\volumebox\stylevolume}
\def\collection{\fabriquebox\collectionbox\stylecollection}
%--------------------------------------------
{\catcode`\- =\active\gdef\annee{\fabriquebox\anneebox\catcode`\-
=\active\def -{\hbox{\rm
\string-\string-}}\styleannee\ignorespaces}}
%--------------------------------------------
{\catcode`\-
=\active\gdef\anneelivre{\fabriquebox\anneelbox\catcode`\-=
\active\def-{\hbox{\rm \string-\string-}}\styleanneelivre}}
%--------------------------------------------
{\catcode`\-=\active\gdef\pages{\fabriquebox\pagesbox\catcode`\-
=\active\def -{\hbox{\rm\string-\string-}}\stylepages}}
%--------------------------------------------
{\catcode`\-
=\active\gdef\divers{\fabriquebox\diversbox\catcode`\-=\active
\def-{\hbox{\rm\string-\string-}}\rm}}
%--------------------------------------------
\def\ajoutref#1{\setbox0=\vbox{\unvbox#1\global\setbox1=
\lastbox}\unhbox1 \unskip\unskip\unpenalty}
%--------------------------------------------
\newif\ifpreviousitem
\global\previousitemfalse
\def\separateur{\ifpreviousitem {,\ }\fi}
%--------------------------------------------
\def\voidallboxes
{\setbox0=\box\auteurbox \setbox0=\box\titrebox
\setbox0=\box\titrelbox \setbox0=\box\editeurbox
\setbox0=\box\anneebox \setbox0=\box\anneelbox
\setbox0=\box\journalbox \setbox0=\box\volumebox
\setbox0=\box\pagesbox \setbox0=\box\diversbox
\setbox0=\box\collectionbox \setbox0=\null}
%--------------------------------------------
\def\fabriquelivre
{\ifdim\ht\auteurbox>0pt
\ajoutref\auteurbox\global\previousitemtrue\fi
\ifdim\ht\titrelbox>0pt
\separateur\ajoutref\titrelbox\global\previousitemtrue\fi
\ifdim\ht\collectionbox>0pt
\separateur\ajoutref\collectionbox\global\previousitemtrue\fi
\ifdim\ht\editeurbox>0pt
\separateur\ajoutref\editeurbox\global\previousitemtrue\fi
\ifdim\ht\anneelbox>0pt \separateur \ajoutref\anneelbox
\fi\global\previousitemfalse}
%--------------------------------------------
\def\fabriquearticle
{\ifdim\ht\auteurbox>0pt        \ajoutref\auteurbox
\global\previousitemtrue\fi \ifdim\ht\titrebox>0pt
\separateur\ajoutref\titrebox\global\previousitemtrue\fi
\ifdim\ht\titrelbox>0pt \separateur{\rm in}\
\ajoutref\titrelbox\global \previousitemtrue\fi
\ifdim\ht\journalbox>0pt \separateur
\ajoutref\journalbox\global\previousitemtrue\fi
\ifdim\ht\volumebox>0pt \ \ajoutref\volumebox\fi
\ifdim\ht\anneebox>0pt  \ {\rm(}\ajoutref\anneebox \rm)\fi
\ifdim\ht\pagesbox>0pt
\separateur\ajoutref\pagesbox\fi\global\previousitemfalse}
%--------------------------------------------
\def\fabriquedivers
{\ifdim\ht\auteurbox>0pt
\ajoutref\auteurbox\global\previousitemtrue\fi
\ifdim\ht\diversbox>0pt \separateur\ajoutref\diversbox\fi}
%--------------------------------------------
\def\endbibref
{\egroup \ifdim\ht\journalbox>0pt \fabriquearticle
\else\ifdim\ht\editeurbox>0pt \fabriquelivre
\else\ifdim\ht\diversbox>0pt \fabriquedivers \fi\fi\fi
.\voidallboxes}
%--------------------------------------------

\let\styleauteur=\sc
\let\styletitre=\it
\let\styletitrelivre=\sl
\let\stylejournal=\rm
\let\stylevolume=\bf
\let\styleannee=\rm
\let\stylepages=\rm
\let\stylecollection=\rm
\let\styleediteur=\rm
\let\styleanneelivre=\rm

\begin{biblio}{99}

\begin{bibref}{BeardU}
\auteur{J. T. B. Beard Jr} \titre{Unitary perfect polynomials over
$GF(q)$} \journal{Rend. Accad. Lincei} \volume{62} \pages 417-422
\annee 1977
\end{bibref}

\begin{bibref}{BeardBiunitary}
\auteur{J. T. B. Beard Jr}  \titre{Bi-Unitary Perfect polynomials over $GF(q)$}
\journal{Annali di Mat. Pura ed Appl.} \volume{149(1)} \pages 61-68 \annee 1987
\end{bibref}

\begin{bibref}{Canaday}
\auteur{E. F. Canaday} \titre{The sum of the divisors of a
polynomial} \journal{Duke Math. J.} \volume{8} \pages 721-737 \annee
1941
\end{bibref}

\begin{bibref}{Gall-Rahav-F4}
\auteur{L. H. Gallardo, O. Rahavandrainy} \titre{On
perfect polynomials over $\F_4$}
\journal{Port. Math. (N.S.)} \volume{62(1)} \pages 109-122 \annee
2005
\end{bibref}

\begin{bibref}{Gall-Rahav3}
\auteur{L. Gallardo, O. Rahavandrainy} \titre{Perfect polynomials
over $\F_4$ with less than five prime factors} \journal{Port. Math.
(N.S.) } \volume{64(1)} \pages 21-38 \annee 2007
\end{bibref}

\begin{bibref}{Gall-Rahav8}
\auteur{L. H. Gallardo, O. Rahavandrainy} \titre{All perfect
polynomials with up to four prime factors over $\F_4$}
\journal{Math. Commun.} \volume{14(1)} \pages 47-65 \annee 2009
\end{bibref}

\begin{bibref}{Gall-Rahav9}
\auteur{L. H. Gallardo, O. Rahavandrainy} \titre{On unitary
splitting perfect polynomials over $\F_{p^2}$} \journal{Math.
Commun.} \volume{15(1)} \pages 159-176 \annee 2010
\end{bibref}

\begin{bibref}{Gall-Rahav2}
\auteur{L. H. Gallardo, O. Rahavandrainy} \titre{On splitting
perfect polynomials over $\F_{p^p}$} \journal{Int. Electron. J.
Algebra} \volume{9} \pages 85-102 \annee 2011
\end{bibref}

\begin{bibref}{Gall-Rahav10}
\auteur{L. H. Gallardo, O. Rahavandrainy} \titre{Unitary perfect
polynomials over $\F_4$ with less than five prime factors}
\journal{Funct. et Approx.} \volume{45(1)} \pages 67-78 \annee 2011
\end{bibref}

\begin{bibref}{Gall-Rahav-bup-Mersenne}
\auteur{L. H. Gallardo, O. Rahavandrainy} \titre{All bi-unitary perfect polynomials over $\F_2$ only divisible by $x$, $x+1$ and by Mersenne primes} \journal{arXiv Math: 2204.13337} \annee 2022
\end{bibref}

\begin{bibref}{Gall-Rahav-split-Fp2}
\auteur{L. H. Gallardo, O. Rahavandrainy} \titre{On splitting bi-unitary perfect polynomials over $\F_{p^2}$} 
\journal{arXiv Math: 2310.05540v3} \annee 2023
\end{bibref}

\end{biblio}

\end{document}